\newtheorem{theorem}{Theorem}[section]
\newtheorem{lemma}[theorem]{Lemma}
\newtheorem{proposition}[theorem]{Proposition}
\newtheorem{remark}[theorem]{Remark}
\theoremstyle{definition}
\theoremstyle{remark}
\newtheorem*{note*}{Note}
\numberwithin{equation}{section}
\newcommand{\rank}{\mathop{\operator@font rank}}
\newcommand{\conv}{\mathop{\operator@font conv}}
\newcommand{\vol}{\mathop{\operator@font vol}}
\newcommand{\onetagright}{\tagsleft@false}
\newcommand{\ls}{\leqslant}
\newcommand{\gr}{\geqslant}
\renewcommand{\epsilon}{\varepsilon}
\begin{document}
\small

\title{\bf Brascamp-Lieb inequality and quantitative versions of Helly's theorem}

\medskip

\author{Silouanos Brazitikos}

\date{}

\maketitle

\begin{abstract}
\footnotesize We provide a number of new quantitative versions of Helly's theorem. For example,
we show that for every family $\{P_i:i\in I\}$ of closed half-spaces
\begin{equation*}P_i=\{ x\in {\mathbb R}^n:\langle x,w_i\rangle \ls 1\}\end{equation*}
in ${\mathbb R}^n$ such that $P=\bigcap_{i\in I}P_i$ has positive volume, there exist $s\ls \alpha n$ and
$i_1,\ldots , i_s\in I$ such that
\begin{equation*}|P_{i_1}\cap\cdots\cap P_{i_s}|\ls (Cn)^n\,|P|,\end{equation*}
where $\alpha , C>0$ are absolute constants. These results complement and improve
previous work of B\'{a}r\'{a}ny-Katchalski-Pach and Nasz\'{o}di. Our method combines the work of Srivastava on approximate
John's decompositions with few vectors, a new estimate on the corresponding constant
in the Brascamp-Lieb inequality and an appropriate variant of Ball's proof of the reverse isoperimetric
inequality.
\end{abstract}

\section{Introduction}

Our starting point is a quantitative version of Helly's theorem on convex sets in Euclidean space. Helly's theorem states that
if ${\mathcal P}=\{ P_i:i\in I\}$ is a finite family of at least $n+1$ convex sets in ${\mathbb R}^n$ and if any $n+1$ members of ${\mathcal P}$ have
non-empty intersection then $\bigcap_{i\in I}P_i\neq\emptyset $. B\'{a}r\'{a}ny, Katchalski and Pach proved in \cite{BKP-1982}
(see also \cite{BKP-1984}) the following quantitative ``volume version":

\smallskip

{\sl Let ${\mathcal P}=\{ P_i:i\in I\}$ be a finite family of convex sets in ${\mathbb R}^n$. If the intersection of
any $2n$ or fewer members of ${\mathcal P}$ has volume greater than or equal to $1$, then
$\left |\bigcap_{i\in I}P_i\right |\gr c_n$, where $c_n>0$ is a constant depending only on $n$.}

\smallskip
Using the fact that every (closed) convex set is the intersection of a family of closed half-spaces and a simple compactness argument
(see \cite{BKP-1982}) one can remove the restriction that ${\mathcal P}$ is finite and also assume that each $P_i$ is a closed half-space.
Therefore, the theorem of B\'{a}r\'{a}ny, Katchalski and Pach is equivalently stated as follows:

\smallskip

{\sl Let ${\mathcal P}=\{ P_i:i\in I\}$ be a family of closed half-spaces in ${\mathbb R}^n$
such that $\left |\bigcap_{i\in I}P_i\right |>0$. There exist $s\ls 2n$
and $i_1,\ldots ,i_s\in I$ such that
\begin{equation}\left |P_{i_1}\cap \cdots \cap P_{i_s}\right |\ls C_n\left |\bigcap_{i\in I} P_i\right |,\end{equation}
where $C_n>0$ is a constant depending only on $n$.}

\smallskip

Note that the cube $[-1,1]^n$ in ${\mathbb R}^n$ can be written as the intersection of the $2n$ closed half-spaces $H_j^{\pm }:=\{ x:\langle x,\pm e_j\rangle \ls 1\}$
and that the intersection of any $2n-1$ of these half-spaces has infinite volume; this shows that one cannot replace $2n$ by $2n-1$ in the statement above.
In \cite{BKP-1982} the authors offered a bound $C_n\ls n^{2n^2}$ for the constant $C_n$ and they conjectured that one might actually
have $C_n\ls n^{cn}$ for an absolute constant $c>0$. Nasz\'{o}di \cite{Naszodi-2015} has recently verified this conjecture; namely, he proved
a volume version of Helly's theorem with $C_n\ls (Cn)^{2n}$, where $C>0$ is an absolute constant. In Section 3 we present a slight modification of
Nasz\'{o}di's argument which leads to the exponent $\frac{3n}{2}$ instead of $2n$:

\begin{theorem}\label{th:intro-1}Let ${\mathcal P}=\{ P_i:i\in I\}$ be a family of closed half-spaces such that $\left |\bigcap_{i\in I}P_i\right |>0$.
We may find $s\ls 2n$ and $i_1,\ldots ,i_s\in I$ such that
\begin{equation}\label{eq:nasz-0-intro}|P_{i_1}\cap\cdots\cap P_{i_s}|\ls (Cn)^{\frac{3n}{2}}\left |\bigcap_{i\in I}P_i\right |,\end{equation}
where $C>0$ is an absolute constant.
\end{theorem}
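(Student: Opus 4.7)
The plan is to follow Nasz\'odi's polar/John-position framework while strengthening the final volume bound via a sharper estimate for the Brascamp--Lieb constant in the approximate-John regime.

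First, I would normalize via John's theorem. Since $|P|>0$ we may assume $0\in\text{int}(P)$ and, after an affine transformation, that $B_2^n$ is the ellipsoid of maximal volume contained in $P$. John's theorem then furnishes contact points $u_j\in \partial B_2^n\cap\partial P$ and weights $c_j>0$ such that $\sum_j c_j\, u_j\otimes u_j = I_n$ and $\sum_j c_j\, u_j = 0$. Because $P$ is an intersection of the half-spaces $P_i$, each contact point lies on a facet whose outer normal coincides with $u_j$, so every contact direction is realized by some $w_{i(j)}$ from the family; this ties John's decomposition directly to the index set $I$.

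Next I would invoke Srivastava's sparsification theorem for John-type decompositions, applied to $\{(u_j,c_j)\}$. This extracts a sub-family of at most $s\ls 2n$ contact points $u_{j_1},\ldots ,u_{j_s}$ with new weights $c_1',\ldots ,c_s'$ for which $\sum_k c_k'\, u_{j_k}\otimes u_{j_k}$ is $(1\pm\epsilon)$-close to $I_n$ in operator norm, with $\epsilon$ an absolute constant calibrated so that the selection has size at most $2n$. Labeling the corresponding indices $i_1,\ldots ,i_s\in I$, I set $Q:=\bigcap_{k=1}^s P_{i_k}$, which contains $P$; these will be the $s$ half-spaces appearing in the conclusion.

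The third and most delicate step is a volume bound for $Q$. Adapting Ball's proof of the reverse isoperimetric inequality, I would dominate the indicator of $Q$ by a product of one-dimensional indicators of the half-lines $(-\infty, 1]$ in the directions $u_{j_k}$, each raised to the weight $c_k'$, and integrate via a Brascamp--Lieb inequality for this near-isotropic system. The key new input is an upper bound of order $(Cn)^{n/2}$ for the corresponding Brascamp--Lieb constant in the approximate regime. Combined with the explicit one-dimensional marginal computation familiar from Ball's argument, this yields $|Q|\ls (Cn)^{3n/2}\,|B_2^n|$, and since $B_2^n\subseteq P$ the inequality $|Q|\ls (Cn)^{3n/2}|P|$ follows.

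The main obstacle is precisely this Brascamp--Lieb estimate: in the exact-John setting Ball's proof gives constant $1$, but Srivastava's selection guarantees only approximate isotropy, so a careful perturbation analysis is required to quantify how the constant deteriorates. Achieving the clean bound $(Cn)^{n/2}$ here --- rather than the looser $(Cn)^n$ which a crude rank-one Brascamp--Lieb argument would yield --- is what drives the improvement of the exponent from Nasz\'odi's $2n$ down to $3n/2$.
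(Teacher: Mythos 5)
Your proposal does not match the paper's proof of Theorem \ref{th:intro-1}, and more importantly it has a genuine gap that prevents it from reaching the stated conclusion $s\ls 2n$.

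The paper's proof of Theorem \ref{th:intro-1} (Section 3) makes no use of Srivastava's sparsification or the Brascamp--Lieb inequality. Instead, starting from John's decomposition, it selects $n$ contact points $v_1,\ldots,v_n$ via the Dvoretzky--Rogers lemma, forms the simplex $S=\conv\{0,v_1,\ldots,v_n\}$ with $|S|\gr n^{-n/2}(n!)^{-1/2}$, replaces $S$ by a centrally symmetric convex body $T\subseteq S$ with $|T|\gr 2^{-n}|S|$ (the new refinement over Nasz\'odi), contracts $T$ into $\conv\{u_j\}$ using Carath\'eodory (picking at most $n$ further contact points), and finishes with the Blaschke--Santal\'o inequality applied to the resulting origin-symmetric body. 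The $2n$ budget comes from the $n$ Dvoretzky--Rogers points plus at most $n$ Carath\'eodory points.

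Your route --- BSS sparsification plus a Brascamp--Lieb bound for approximate John decompositions --- is the one the paper uses for Theorems \ref{th:halfspaces-intro1} and \ref{th:halfspaces-intro2}, and it cannot deliver $s\ls 2n$. The obstruction is the centering condition: when $P$ is a general (non-symmetric) intersection of half-spaces, Ball's Brascamp--Lieb argument needs $\sum_j a_j u_j=0$ (one must use exponential test functions $e^{-t}\mathbf 1_{[0,\infty)}$, since $\mathbf 1_{(-\infty,1]}$ is not integrable on $\mathbb R$, so your ``indicator of $(-\infty,1]$'' step is not even well-posed). After you pass to a sparse subfamily via Theorem \ref{th:contact-srivastava}, the sum $\sum_{j\in\sigma} c_j u_j$ is no longer zero, and the Brascamp--Lieb integral can be infinite. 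The paper repairs this by lifting to $\mathbb R^{n+1}$ (where the centering constraint disappears) and then re-absorbing the resulting drift vector $w$ using Carath\'eodory, which costs at least $d(n+1)+(n+1)=(d+1)(n+1)>2n$ half-spaces for any $d>1$ --- this is exactly why Theorem \ref{th:halfspaces-intro1} has the weaker cardinality bound. There is also a smaller inaccuracy: BSS with $s\ls 2n$ gives $\gamma_2=(\sqrt2+1)^4\approx 34$, not a $(1\pm\varepsilon)$-approximation of $I_n$; this does not by itself wreck the $(Cn)^{3n/2}$ exponent, but it contradicts the calibration you describe.

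So your approach is structurally the right tool for the paper's other half-space theorems, but for the $2n$ bound in Theorem \ref{th:intro-1} you need the simplex--Dvoretzky--Rogers--Santal\'o route, not sparsification plus Brascamp--Lieb.
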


The aim of this work is to study a natural question that arises from Theorem \ref{th:intro-1}. Given $N>2n$ we would like to estimate the quantity
\begin{equation}C_{n,N}=\sup\frac{|P_{i_1}\cap\cdots\cap P_{i_N}|}{\left |\bigcap_{i\in I}P_i\right |}\end{equation}
where the supremum is over all families ${\mathcal P}=\{ P_i:i\in I\}$ of closed half-spaces with $\left |\bigcap_{i\in I}P_i\right |>0$.
We would also like to study the same question in the case of families of symmetric strips in ${\mathbb R}^n$.

Starting with the symmetric case, our main result is the next theorem.

\begin{theorem}\label{th:strips-intro}Let $\{P_i:i\in I\}$ be a family of symmetric strips
\begin{equation}P_i=\{ x\in {\mathbb R}^n:|\langle x,w_i\rangle |\ls 1\}\end{equation}
in ${\mathbb R}^n$, such that $P=\bigcap_{i\in I}P_i$ has positive volume. For every $d>1$ there exist $s\ls dn$ and $i_1,\ldots ,i_s\in I$
such that
\begin{equation}|P_{i_1}\cap\cdots\cap P_{i_s}|\ls \left (\frac{4\gamma_d}{\pi}\right)^{\frac{n}{2}}\Gamma \left(\frac{n}{2}+1\right )|P|,\end{equation}
where $\gamma_d:=\left(\frac{\sqrt{d}+1}{\sqrt{d}-1}\right)^2$.\end{theorem}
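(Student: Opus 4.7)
The plan is to follow the scheme suggested by the abstract: combine Srivastava's sparse approximate John's decomposition, an adaptation of Ball's reverse-isoperimetric argument via the geometric Brascamp--Lieb inequality, and a change of variable that converts the approximate decomposition into an exact one at a controlled cost.

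First, I would put $P$ in its symmetric John position, so that $B_2^n$ is the maximal-volume ellipsoid contained in $P$. In this position every $w_i$ satisfies $|w_i|\leq 1$, the contact vectors of $\partial P\cap S^{n-1}$ are exactly the $\pm w_i$ with $|w_i|=1$, and $|P|\geq |B_2^n|=\pi^{n/2}/\Gamma(n/2+1)$. Using this last inequality, the conclusion reduces to the scale-free absolute bound
\[
|P_{i_1}\cap\cdots\cap P_{i_s}|\leq (4\gamma_d)^{n/2}=2^n\gamma_d^{n/2}.
\]

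Next, take an exact John decomposition $\sum_\alpha c_\alpha\, u_\alpha\otimes u_\alpha=I_n$ supported on the contact set and feed it into Srivastava's approximate-John theorem. This produces $s\leq dn$ contact vectors $u_1,\ldots,u_s$ (each of the form $\pm w_{i_j}$) and positive weights $c_1,\ldots,c_s$ with
\[
I_n\preceq A:=\sum_{j=1}^{s}c_j\,u_j\otimes u_j\preceq \gamma_d\, I_n,
\]
and I select the corresponding strips $P_{i_1},\ldots,P_{i_s}$. To bound their intersection, write $|P_{i_1}\cap\cdots\cap P_{i_s}|=\int_{\mathbb R^n}\prod_j \chi_{[-1,1]}(\langle x,u_j\rangle)\,dx$ and perform the substitution $x=A^{-1/2}y$. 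Setting $v_j:=A^{-1/2}u_j$, we obtain $\sum_j c_j\, v_j\otimes v_j=I_n$ exactly, at the cost of a Jacobian $(\det A)^{-1/2}\leq 1$, and the norms $\alpha_j:=|v_j|=\langle A^{-1}u_j,u_j\rangle^{1/2}$ satisfy $\gamma_d^{-1/2}\leq\alpha_j\leq 1$ by the sandwich on $A$. Rescaling to unit vectors $\tilde v_j=v_j/\alpha_j$ with weights $\tilde c_j=c_j\alpha_j^2$, we still have $\sum_j\tilde c_j\,\tilde v_j\otimes\tilde v_j=I_n$ and $\sum_j\tilde c_j=n$. The geometric Brascamp--Lieb inequality of Ball and Barthe, applied to the indicators $\chi_{[-1/\alpha_j,1/\alpha_j]}$, bounds the $y$-integral by $\prod_j(2/\alpha_j)^{\tilde c_j}=2^{\sum_j\tilde c_j}\prod_j\alpha_j^{-\tilde c_j}\leq 2^n\gamma_d^{n/2}$, using $\alpha_j\geq\gamma_d^{-1/2}$ and $\sum_j\tilde c_j=n$. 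Combined with the Jacobian factor, this yields the target absolute bound.

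I expect the main obstacle to be the step that translates the \emph{approximate} John decomposition into a Brascamp--Lieb volume estimate, since the geometric Brascamp--Lieb inequality in its usual form demands an exact identity decomposition by \emph{unit} vectors. The trick is to absorb the approximation simultaneously into the Jacobian of the linear substitution and into the rescaling to unit vectors, so that the two losses partially cancel: concretely, the lower bound $\alpha_j\geq\gamma_d^{-1/2}$ coming from $A\preceq\gamma_d I_n$ is what controls the remaining error and encodes the ``new estimate on the Brascamp--Lieb constant'' promised in the abstract. A secondary point to verify is that Srivastava's sparsifier can be taken to be a sub-family of the original contact points (so the selected $u_j$ are indeed among the $\pm w_i$ of the given family), which is built into the standard formulation of the theorem.
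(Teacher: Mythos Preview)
Your proof is correct and reaches the same absolute bound $2^n\gamma_d^{n/2}$ as the paper, but the route to the Brascamp--Lieb step differs. The paper first isolates a general lemma (its Theorem~\ref{BL-approx}): if $I_n\preceq A=\sum_j c_j u_j\otimes u_j\preceq\gamma I_n$ and one sets $\kappa_j=c_j\langle A^{-1}u_j,u_j\rangle$, then the Brascamp--Lieb constant for $(u_j,\kappa_j)$ is at most $\gamma^{n/2}$; this is proved by bounding $\det\bigl(\sum_j\kappa_j\lambda_j u_j\otimes u_j\bigr)/\prod_j\lambda_j^{\kappa_j}$ via the Cauchy--Binet formula and the arithmetic--geometric mean inequality, and the theorem then follows by specializing to $f_j=\chi_{[-1,1]}$. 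Your argument bypasses this lemma: the substitution $x=A^{-1/2}y$ converts the approximate decomposition into an exact one $\sum_j c_j v_j\otimes v_j=I_n$, at the price of a Jacobian $(\det A)^{-1/2}\le 1$, and after normalizing $v_j$ you invoke the classical geometric Brascamp--Lieb inequality of Ball directly. It is worth noting that your rescaled weights $\tilde c_j=c_j|A^{-1/2}u_j|^2=c_j\langle A^{-1}u_j,u_j\rangle$ are precisely the paper's $\kappa_j$, so your change-of-variables trick can in fact be read as an alternative, more conceptual proof of Theorem~\ref{BL-approx} itself. The paper's formulation has the advantage of being packaged as a reusable inequality (it is invoked again in the non-symmetric Theorems~\ref{th:halfspaces} and~\ref{th:halfspaces2}), while your route is somewhat more elementary since it appeals only to the standard geometric case.
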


Note that if $d\gg 1$ then the constant $C_{n,\lfloor dn\rfloor }$ is bounded by $(Cn)^{\frac{n}{2}}$. In the non-symmetric case we first use a similar
strategy (whose details are of course more delicate) to obtain an estimate
comparable to the one in Theorem \ref{th:intro-1}.

\begin{theorem}\label{th:halfspaces-intro1}Let $\{P_i:i\in I\}$ be a family of closed half-spaces
\begin{equation}P_i=\{ x\in {\mathbb R}^n:\langle x,v_i\rangle \ls 1\}\end{equation}
in ${\mathbb R}^n$, such that $P=\bigcap_{i\in I}P_i$ has positive volume. For every $d>1$ there exist $s\ls (d+1)(n+1)$ and
$i_1,\ldots , i_s\in I$ such that
\begin{equation}|P_{i_1}\cap\cdots\cap P_{i_s}|\ls \gamma_d^{\frac{n+1}{2}}\frac{n^{n/2}(n+1)^{3(n+1)/2}}{\pi^{\frac{n}{2}}n!}\Gamma \left(\frac{n}{2}+1\right )\,|P|
\ls \gamma_d^{\frac{n+1}{2}}(Cn)^{\frac{3n}{2}}|P|,\end{equation}where $C>0$ is an absolute constant.
\end{theorem}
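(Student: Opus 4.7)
The plan is to adapt the strategy used for Theorem \ref{th:strips-intro} to the non-symmetric setting of half-spaces by lifting the problem to one additional dimension in order to compensate for the absence of central symmetry. First, after a volume-ratio-preserving affine change of variables, I would assume that the Euclidean ball $B_2^n$ is the maximal-volume ellipsoid inscribed in $P$. John's theorem in its general (non-symmetric) form then provides contact points $u_1,\ldots,u_N\in S^{n-1}\cap\partial P$ and positive weights $c_1,\ldots,c_N$ satisfying
\[\sum_{j} c_j\, u_j\otimes u_j = I_n \qquad\text{and}\qquad \sum_{j} c_j\, u_j = 0.\]
Each $u_j$ is the tangency point of $B_2^n$ with some supporting hyperplane $\{x:\langle x,u_j\rangle=1\}$ of $P$, and, since $B_2^n\subseteq P$, this hyperplane must be the boundary of some half-space $P_{i_j}$ in the family; so the contact points single out a canonical subfamily of $\{P_i\}_{i\in I}$.

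The crucial lifting step is to associate to each $u_j$ the unit vector $\tilde u_j := \sqrt{n/(n+1)}\bigl(u_j,-1/\sqrt n\bigr) \in S^{n}\subset\mathbb R^{n+1}$. Using both John identities simultaneously, together with $\sum_j c_j = \mathrm{tr}(I_n) = n$, a direct computation gives $\sum_j \tfrac{n+1}{n}c_j\,\tilde u_j\otimes\tilde u_j = I_{n+1}$. Thus the lift converts the non-centred John decomposition of $P$ into a genuine (symmetric-type) John decomposition in $\mathbb R^{n+1}$ at the price of one extra coordinate; the exponents $\tfrac{n+1}{2}$ and the powers of $n+1$ in the theorem all originate from this extra dimension.

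Now I would apply Srivastava's theorem to the lifted decomposition in $\mathbb R^{n+1}$: for any $d>1$ it selects at most $s\ls (d+1)(n+1)$ of the lifted vectors and produces positive replacement weights whose associated operator has spectrum contained in an interval with ratio at most $\gamma_d$, i.e., an approximate isotropic decomposition. To bound the volume of the intersection $P_{i_1}\cap\cdots\cap P_{i_s}$ of the corresponding original half-spaces I would use the Brascamp--Lieb inequality in a form valid for such approximate decompositions: the new estimate on the Brascamp--Lieb constant (a core technical contribution of the paper) contributes precisely the factor $\gamma_d^{(n+1)/2}$, the exponent again reflecting the lifted dimension. An appropriate non-symmetric variant of Ball's proof of the reverse isoperimetric inequality then transforms the Brascamp--Lieb output into the explicit volume bound displayed in the theorem: the factor $\Gamma(n/2+1)/\pi^{n/2}$ is recognised as $1/|B_2^n|$ (consistent with $B_2^n\subseteq P$), while $n^{n/2}(n+1)^{3(n+1)/2}/n!$ records the contributions of the one-dimensional integrals produced by Ball's cone representation along the directions $\tilde u_{i_j}$. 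A routine Stirling-type simplification finally yields the clean bound $\gamma_d^{(n+1)/2}(Cn)^{3n/2}|P|$.

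The main obstacle, I expect, is the derivation of the sharp Brascamp--Lieb constant estimate in the approximate-decomposition regime and the faithful tracking of its interaction with the dimension lift: one must control how the eigenvalue discrepancy $\gamma_d$ propagates through Ball's integral representation without losing more than polynomial-in-$n$ factors, and one must reconcile the geometry of the resulting ``strips'' in $\mathbb R^{n+1}$ with the volume $|P_{i_1}\cap\cdots\cap P_{i_s}|$ of the half-space intersection on the original hyperplane. A subsidiary difficulty is that the $-1/\sqrt n$ coordinates introduced by the lift produce an asymmetric shift that has to be absorbed cleanly into the Brascamp--Lieb input, which is precisely where the non-symmetric variant of Ball's argument becomes indispensable.
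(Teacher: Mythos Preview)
Your high-level strategy---put $P$ in John's position, lift the contact-point decomposition to $\mathbb{R}^{n+1}$, sparsify with Batson--Spielman--Srivastava, and then feed the approximate decomposition into the Brascamp--Lieb inequality via a Ball-type cone argument---is exactly what the paper does. However, there is a genuine missing step, and your count $s\ls (d+1)(n+1)$ hides it.

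Batson--Spielman--Srivastava applied to the lifted decomposition in $\mathbb{R}^{n+1}$ yields a subset $\sigma$ with $|\sigma|\ls d(n+1)$, not $(d+1)(n+1)$. The remaining $n+1$ half-spaces come from a separate Carath\'eodory step that you do not mention and that is essential. The point is that after sparsification the barycentre condition $\sum_j c_j u_j=0$ is destroyed: with the Brascamp--Lieb weights $\kappa_j$ one gets a nonzero drift vector $w\propto\sum_{j\in\sigma}\kappa_j u_j$, and in Ball's cone computation with $f_j(t)=e^{-t}\mathbf{1}_{[0,\infty)}(t)$ this produces an uncontrolled factor $\exp\bigl(-c\,\langle x,w\rangle\bigr)$ inside the integral over $\frac{r}{\sqrt{n}}Q$. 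The ``non-symmetric variant of Ball's argument'' that you invoke does not absorb this term on its own, because Ball's original argument relies precisely on the centering that sparsification has just broken. The paper's fix is to note that $\|w\|_2\ls 1/n$, so $w\in\conv\{u_j:j\in J\}$; Carath\'eodory then writes $w=\sum_{i\in\tau}\rho_i u_i$ with $|\tau|\ls n+1$, and by \emph{adding} the half-spaces $\{\langle x,u_i\rangle\ls 1\}$, $i\in\tau$, to the intersection one obtains a smaller body $Q'$ on which $\langle x,w\rangle\ls r/\sqrt{n}$, allowing the exponential to be bounded. This is where the extra $n+1$ half-spaces, the total $(d+1)(n+1)$, and the factor $(n+1)^{3(n+1)/2}$ actually come from.

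So the outline is right, but without the Carath\'eodory step the Ball-type integral does not close, and the stated bound on $s$ is not a consequence of the sparsification alone.
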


Note that Theorem \ref{th:halfspaces-intro1} gives the same dependence on $n$ as Theorem \ref{th:intro-1}. In fact, Theorem \ref{th:intro-1} is stronger
if what matters is to use (the smallest possible number of) $2n$ of the half-spaces $P_i$. On the other hand, there is a (small) difference in the value
of the constant $C$ involved in the two statements: the proof of Theorem \ref{th:intro-1} works with $C=2\sqrt[3]{\pi }$, while the proof of Theorem \ref{th:halfspaces-intro1} works
with $C_d=\left(\frac{e\gamma_d}{2\pi }\right )^{\frac{1}{3}}$ (which is smaller than $C$ if $d$ is large enough).

However, if we relax the condition on the number $s$ of half-spaces that we use (but still require that it is proportional to the
dimension) we are able to (significantly) improve the exponent in the constant $C_{n,N}$ from $\frac{3n}{2}$ to $n$:

\begin{theorem}\label{th:halfspaces-intro2}There exists an absolute constant $\alpha >1$ with the following
property: for every family $\{P_i:i\in I\}$ of closed half-spaces
\begin{equation}P_i=\{ x\in {\mathbb R}^n:\langle x,v_i\rangle \ls 1\}\end{equation}
in ${\mathbb R}^n$, such that $P=\bigcap_{i\in I}P_i$ has positive volume, there exist $s\ls \alpha n$ and
$i_1,\ldots , i_s\in I$ such that
\begin{equation}|P_{i_1}\cap\cdots\cap P_{i_s}|\ls (Cn)^n\,|P|,\end{equation}
where $C>0$ is an absolute constant.
\end{theorem}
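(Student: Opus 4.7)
The plan is to follow the general framework used for Theorems \ref{th:intro-1} and \ref{th:halfspaces-intro1} --- Srivastava's approximate John decomposition combined with a Brascamp--Lieb-based volume bound in the style of Ball's proof of the reverse isoperimetric inequality --- while saving the factor $n^{n/2}$ that the coarser version throws away. After an affine transformation we may assume that $B_2^n$ is the maximal volume ellipsoid inscribed in $P$, so that $P\supset B_2^n$, $|P|\ge \omega_n=\pi^{n/2}/\Gamma(n/2+1)$, and John's theorem produces contact points $\{u_i\}\subset\partial B_2^n\cap\partial P$ --- each one the unit outward normal of one of the defining half-spaces --- together with positive weights $c_i$ satisfying $\sum c_iu_i\otimes u_i=I_n$ and $\sum c_iu_i=0$.

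Srivastava's sparsification theorem, applied exactly as in the proofs of Theorems \ref{th:strips-intro} and \ref{th:halfspaces-intro1}, extracts $s\le \alpha n$ indices $j_1,\dots,j_s$ and positive weights $d_k$, normalized so that $\sum d_k=n$, with the property that $A:=\sum d_ku_{j_k}\otimes u_{j_k}$ has spectrum of ratio at most $\gamma_\alpha$ for a constant $\gamma_\alpha$ depending only on $\alpha$; in particular $\det A\ge \gamma_\alpha^{-n}$. Set $Q:=P_{j_1}\cap\cdots\cap P_{j_s}$; then $P\subset Q$, $B_2^n\subset Q$, and each $u_{j_k}\in\partial B_2^n\cap\partial Q$. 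Feeding Ball's test functions $f_k(t)=e^{t-1}\mathbf{1}_{(-\infty,1]}(t)$ into the rank-one Brascamp--Lieb inequality attached to the data $(d_k,u_{j_k})$ yields
\begin{equation*}
\int_Q \exp\bigl(\langle x,w\rangle-n\bigr)\,dx = \int_{\mathbb{R}^n}\prod_k f_k\bigl(\langle x,u_{j_k}\rangle\bigr)^{d_k}\,dx \le (\det A)^{-1/2}\le \gamma_\alpha^{n/2},
\end{equation*}
where $w:=\sum_k d_ku_{j_k}$. If Srivastava's selection could be arranged to preserve the centering $\sum d_ku_{j_k}=0$, then $w=0$ and one would conclude $|Q|\le e^n\gamma_\alpha^{n/2}\le C_0^n$, which combined with $|P|\ge \omega_n\sim (2\pi e/n)^{n/2}/\sqrt{\pi n}$ gives $|Q|/|P|\le C_0^n/\omega_n\le (Cn)^n$ via Stirling.

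The main obstacle is that Srivastava's theorem does \emph{not} preserve $\sum d_ku_{j_k}=0$, and this is precisely where the ``new estimate on the corresponding constant in the Brascamp--Lieb inequality'' and the ``appropriate variant of Ball's proof'' announced in the abstract must enter. The non-centered contribution $\langle x,w\rangle$ in the exponent has to be controlled over $Q$ by combining the spectral lower bound $A\succeq \gamma_\alpha^{-1}I$ with the fact that the selected $u_{j_k}\in\partial Q$ confine $Q$ to a bounded region with controlled mean width in all directions; together with the refined BL estimate this turns the display above into $|Q|\le C_0^n$ for an absolute constant $C_0$. Once this quantitative version of Ball's argument is in hand, the remaining steps --- the reduction to John's position, Srivastava's selection and the comparison with $\omega_n$ --- are routine, and the final ratio $|Q|/|P|\le C_0^n/\omega_n$ is majorized by $(Cn)^n$ for $s\le \alpha n$, as required.
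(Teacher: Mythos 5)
Your framework (John's position, Srivastava sparsification, Brascamp--Lieb via Proposition~\ref{deterlemma}, compare with $\omega_n$) is the right one and matches the paper, and you correctly put your finger on the real obstacle: Theorem~\ref{th:contact-srivastava} does not preserve the centering $\sum_k d_k u_{j_k}=0$, so Ball's one-line integration fails. But at that point you only gesture at the resolution --- ``combining the spectral lower bound $A\succeq\gamma_\alpha^{-1}I$ with the fact that the selected $u_{j_k}\in\partial Q$ confine $Q$ to a bounded region with controlled mean width'' --- and this is not an argument. There is no a priori reason that the $s\le\alpha n$ sparsified normals confine $Q$ at all; an intersection of half-spaces whose normals do not positively span $\mathbb{R}^n$ is unbounded, and the spectral condition $\sum d_k u_{j_k}\otimes u_{j_k}\succeq \gamma_\alpha^{-1}I$ alone does not rule this out when $\sum d_k u_{j_k}\ne 0$. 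Moreover, even if $Q$ were bounded, controlling $\int_Q e^{\langle x,w\rangle}\,dx$ requires a quantitative bound on $\langle x,w\rangle$ over $Q$ that your sketch never produces.

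The paper's proof supplies exactly the two ingredients your sketch is missing. First, it invokes a \emph{different} result of Srivastava (Theorem~\ref{th:sriv}), not Theorem~\ref{th:contact-srivastava}: there is a set $\sigma$ of size $O_\varepsilon(n)$, weights $c_j$, and a \emph{small} shift vector $u$ with $\|u\|_2^2\le \varepsilon/\sum c_j$ such that the shifted vectors $u_j+u$ satisfy both the spectral sandwich and the exact centering $\sum_{j\in\sigma}c_j(u_j+u)=0$. This is the tool that makes the bias controllable: one has $\sum_{j\in\sigma}c_j u_j = -(\sum c_j)u$ with $\|u\|_2\lesssim\sqrt{\varepsilon/n}$. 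Second, the paper does not work with your one-sided exponentials in $\mathbb{R}^n$; following Ball's treatment of the non-symmetric case, it lifts to $\mathbb{R}^{n+1}$ via $v_j=\sqrt{\tfrac{n}{n+1}}\bigl(-u_j,\tfrac{1}{\sqrt n}\bigr)$, shows after an explicit perturbation computation (with $\varepsilon=10^{-3}$) that $\tfrac12 I_{n+1}\preceq\sum b_j v_j\otimes v_j\preceq 5I_{n+1}$, and then applies Proposition~\ref{deterlemma} in $\mathbb{R}^{n+1}$. Finally --- and this is the step that keeps $Q$ bounded and the bias under control --- it rescales $u$ to $w=u/\sqrt{\varepsilon n}$, observes $\|w\|_2\le 1/n$ so $w\in\conv\{u_j\}$ by \eqref{eq:ball-inside}, and adds the at most $n+1$ Carath\'eodory half-spaces for $w$ to the selected set $\sigma$; this enlarged family (still $O(n)$ half-spaces) is what defines $Q$, and on $\tfrac{r}{\sqrt n}Q$ the linear term $\langle x,\sum c_j u_j\rangle$ is bounded below by $-O(\sqrt\varepsilon\, rn)$. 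None of these steps is ``routine,'' and without them the proposal does not close.
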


Let us note that, in the recent paper \cite{LHRS-2015}, De Loera, La Haye, Rolnick and Sober\'{o}n have presented many interesting
results, both continuous and discrete, that may be viewed as quantitative versions of Carath\'{e}odory's, Helly's and Tverberg's theorems.
For example, they prove that for every $n\gr 1$ and $\varepsilon >0$ there exists a positive integer $N(n,\varepsilon )$ with the following
property: if ${\mathcal F}=\{ F_i:i\in I\}$ is a finite family of convex sets in ${\mathbb R}^n$
such that $\left |F_{i_1}\cap\cdot\cap F_{i_s}\right |\gr 1$ for all $s\ls Nn$ and all $i_1,\ldots ,i_s\in I$, then
\begin{equation}\left |\bigcap_{i\in I}F_i\right |\gr\frac{1}{1+\varepsilon }.\end{equation}
They also obtain a variant of this statement in which volume is replaced by diameter, as well as a ``colorful" volume version
of Helly's theorem. We would like to emphasize that the ``philosophy" of all these results is completely different from the one
in our work. The parameter $N(n,\varepsilon )$ is defined as the smallest integer such that, for every convex set $K\subset {\mathbb R}^n$
of positive volume there exists a polytope $P\supseteq K$ with at most $M(n,\varepsilon )$ facets such that
$|P|\ls (1+\varepsilon )|K|$, and it is known that $N(n,\varepsilon )$ is exponential in $n$ and $\varepsilon $: one has
\begin{equation}\left (\frac{c_1n}{\varepsilon }\right )^{\frac{n-1}{2}}\ls N(n,\varepsilon )\ls\left(\frac{c_2n}{\varepsilon }\right)^{\frac{n-1}{2}}.\end{equation}
We are interested in the best lower bound that one can obtain for $\left |\bigcap_{i\in I}F_i\right |$ in terms of a lower bound
for the volume of the intersection of any $N\sim n$ of the sets $F_i$ ($N$ is assumed proportional to the dimension).

We close this introductory section by briefly explaining the main ideas behind the proof of our results in the non-symmetric case. We may assume
that $P=\bigcap_{i\in I}\{x\in {\mathbb R}^n:\langle x,v_i\rangle\ls 1\}$ has finite volume and, since the statements are affinely invariant,
that $P$ is in John's position, i.e. the ellipsoid of maximal volume inscribed in $P$ is the
Euclidean unit ball $B_2^n$. Then, we have John's decomposition of the identity (see Section 2 for background
information): there exists $J\subseteq I$ such that $v_j$, $j\in J$ are contact points of $P$ and $B_2^n$ and there are
positive scalars $a_j$, $j\in J$ such that
\begin{equation}I_n=\sum_{j\in J}a_jv_j\otimes v_j\quad\hbox{and}\quad \sum_{j\in J}a_jv_j=0.\end{equation}
Given $d>1$ we would like to extract a subset $\sigma $ of $J$, of cardinality $dn$, which still
forms an approximate John's decomposition of the identity with suitable weigths. To this end, for the proof of Theorem \ref{th:halfspaces-intro1} we use a result of Batson, Spielman and Srivastava from \cite{BSS-2009}: there exists a subset $\sigma\subseteq J$ with $|\sigma |\ls dn$ and $b_j>0$, $j\in \sigma $, such that
\begin{equation}I_n\preceq \sum_{j\in\sigma }b_ja_jv_j\otimes v_j \preceq \gamma_dI_n,\end{equation}
where $\gamma_d:=\left(\frac{\sqrt{d}+1}{\sqrt{d}-1}\right)^2$. For the proof of Theorem \ref{th:halfspaces-intro2}
we use a second, more delicate, theorem of Srivastava from \cite{Srivastava-2012} (see Section 4 for the precise statement).

Then, we would like to exploit an appropriate variant of Ball's proof of the reverse isoperimetric inequality in \cite{Ball-1991}
in order to estimate the volume of $Q:=\bigcap_{j\in\sigma }P_j$ using the Brascamp-Lieb inequality (see Section 5).
The main problem now is to obtain an estimate for the constant in the Brascamp-Lieb inequality that corresponds to an approximate
John's decomposition. To the best of our knowledge this question had not been studied. Our main technical result is
the next theorem; we feel that it is a useful tool of independent interest.

\begin{theorem}\label{BL-approx-intro}Let $\gamma >1$. Let $u_1,\ldots ,u_s \in S^{n-1}$ and $c_1,\ldots ,c_s>0$ satisfy
\begin{equation}I_n\preceq A:=\sum_{j=1}^sc_ju_j\otimes u_j \preceq \gamma I_n\end{equation}
and set $\kappa_j=c_j\langle A^{-1}u_j, u_j\rangle >0$, $1\ls j\ls s$. If
$f_1,\ldots ,f_s:{\mathbb R}\to {\mathbb R}^+$ are measurable
functions then
\begin{equation}\int_{{\mathbb R}^n}\prod_{j=1}^sf_j^{\kappa_j}(\langle x,u_j\rangle )dx
\ls \gamma^{\frac{n}{2}}\prod_{j=1}^s \left(\int_{{\mathbb R}} f_j(t)dt\right)^{\kappa_j}.\end{equation}
\end{theorem}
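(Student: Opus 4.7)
\medskip

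\noindent\textbf{Proof plan.} The plan is to reduce the approximate identity $I_n \preceq A \preceq \gamma I_n$ to an exact (rank-one) decomposition of the identity by a linear change of variables, and then apply the geometric (Ball-Barthe) Brascamp--Lieb inequality. The only loss will come from two Jacobian factors that we can bound against $\gamma^{n/2}$ using exactly the two pinching inequalities available.

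First, I would set $v_j := A^{-1/2}u_j$ and $\hat{v}_j := v_j/\|v_j\|$. A direct computation gives $\|v_j\|^2 = \langle A^{-1}u_j, u_j\rangle$, so $\kappa_j = c_j\|v_j\|^2$, and hence
\begin{equation*}
\sum_{j=1}^s \kappa_j\, \hat{v}_j \otimes \hat{v}_j \;=\; \sum_{j=1}^s c_j\, v_j \otimes v_j \;=\; A^{-1/2}\Bigl(\sum_{j=1}^s c_j u_j \otimes u_j\Bigr) A^{-1/2} \;=\; I_n.
\end{equation*}
Taking traces also yields $\sum_j \kappa_j = \mathrm{tr}(A^{-1}A) = n$, so $(\hat{v}_j,\kappa_j)$ is a genuine John-type decomposition of the identity with unit vectors.

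Next, I would invoke Ball's form of the Brascamp--Lieb inequality for this exact decomposition, applied to the auxiliary functions $g_j(t) := f_j(\|v_j\|t)$:
\begin{equation*}
\int_{{\mathbb R}^n}\prod_{j=1}^s g_j^{\kappa_j}(\langle y, \hat{v}_j\rangle)\,dy \;\ls\; \prod_{j=1}^s\Bigl(\int_{{\mathbb R}} g_j(t)\,dt\Bigr)^{\kappa_j} \;=\; \prod_{j=1}^s \|v_j\|^{-\kappa_j}\Bigl(\int_{{\mathbb R}} f_j(t)\,dt\Bigr)^{\kappa_j}.
\end{equation*}
Then I would perform the substitution $y = A^{1/2}x$ on the left-hand side. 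Since $\langle A^{1/2}x, \hat{v}_j\rangle = \langle x, u_j\rangle/\|v_j\|$, we have $g_j(\langle y,\hat{v}_j\rangle) = f_j(\langle x, u_j\rangle)$, and the Jacobian contributes $\sqrt{\det A}$. Rearranging,
\begin{equation*}
\int_{{\mathbb R}^n}\prod_{j=1}^s f_j^{\kappa_j}(\langle x, u_j\rangle)\,dx \;\ls\; \frac{1}{\sqrt{\det A}\,\prod_j\|v_j\|^{\kappa_j}}\,\prod_{j=1}^s\Bigl(\int_{{\mathbb R}} f_j(t)\,dt\Bigr)^{\kappa_j}.
\end{equation*}

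Finally I would bound the geometric factor. From $A \succeq I_n$, $\det A \ge 1$, and from $A \preceq \gamma I_n$ we get $A^{-1} \succeq \gamma^{-1}I_n$, hence $\|v_j\|^2 = \langle A^{-1}u_j,u_j\rangle \ge \gamma^{-1}$. Using $\sum_j\kappa_j = n$,
\begin{equation*}
\log\Bigl(\sqrt{\det A}\,\prod_{j=1}^s \|v_j\|^{\kappa_j}\Bigr) \;=\; \tfrac{1}{2}\log\det A + \tfrac{1}{2}\sum_{j=1}^s \kappa_j \log\|v_j\|^2 \;\gr\; 0 - \tfrac{n}{2}\log\gamma,
\end{equation*}
which gives exactly the desired constant $\gamma^{n/2}$. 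The only subtle step is the algebraic identity in paragraph one showing that $(\hat v_j, \kappa_j)$ is an \emph{exact} isotropic decomposition; everything else is a routine change of variables plus the two trivial bounds $\det A\ge 1$ and $\|A^{-1/2}u_j\|^2 \ge \gamma^{-1}$, which are precisely the two halves of the approximate John hypothesis.
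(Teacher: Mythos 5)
Your proof is correct, and it takes a genuinely different route from the paper's. The paper reproves the key determinantal inequality from scratch: it applies the Cauchy--Binet formula to $\det\bigl(\sum c_j\lambda_j u_j\otimes u_j\bigr)$, invokes the arithmetic--geometric means inequality on the resulting sum over $n$-subsets, identifies the exponent as $\kappa_j = c_j\langle A^{-1}u_j,u_j\rangle$ via the rank-one determinant update (Lemma \ref{mdetlemma}), and then uses $\det A\ge 1$ together with the matrix comparison $\sum c_j\lambda_j u_j\otimes u_j \preceq \gamma\sum\kappa_j\lambda_j u_j\otimes u_j$ (from $\gamma\kappa_j\ge c_j$) to close the estimate; this directly bounds the constant $F$ in Theorem \ref{th:brascamp-lieb} by $\gamma^{-n}$. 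You instead whiten the data: $v_j = A^{-1/2}u_j$, $\hat v_j = v_j/\|v_j\|$ give an \emph{exact} isotropic decomposition $\sum\kappa_j\,\hat v_j\otimes\hat v_j = I_n$ with $\sum\kappa_j = n$, for which Ball's geometric Brascamp--Lieb inequality (recalled in Section 5 of the paper just after Theorem \ref{th:brascamp-lieb}) holds with constant $1$; a change of variables $y = A^{1/2}x$ then converts this back to the original integral, and the only loss is the factor $\bigl(\sqrt{\det A}\,\prod_j\|v_j\|^{\kappa_j}\bigr)^{-1}$, which is bounded by $\gamma^{n/2}$ using precisely the two halves of the hypothesis ($\det A\ge 1$ and $\langle A^{-1}u_j,u_j\rangle\ge\gamma^{-1}$). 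Your reduction is arguably more transparent and explains why $\gamma^{n/2}$ is the natural constant, but it uses Ball's observation as a black box; the paper's Cauchy--Binet argument is self-contained and, since Ball's observation is itself proved via Cauchy--Binet, the two approaches ultimately rest on the same computation.
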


In Section 6 we present the proofs of the main results.

\section{Notation and background}

We work in ${\mathbb R}^n$, which is equipped with a Euclidean structure $\langle\cdot ,\cdot\rangle $. We denote by $\|\cdot \|_2$
the corresponding Euclidean norm, and write $B_2^n$ for the Euclidean unit ball and $S^{n-1}$ for the unit sphere.
Volume is denoted by $|\cdot |$. We write $\omega_n$ for the volume of $B_2^n$ and $\sigma $ for the rotationally invariant probability
measure on $S^{n-1}$. We will denote by $P_F$ the orthogonal projection from $\mathbb R^{n}$ onto $F$. We also define
$B_F=B_2^n\cap F$ and $S_F=S^{n-1}\cap F$.

The letters $c,c^{\prime }, c_1, c_2$ etc. denote absolute positive constants which may change from line to line. Whenever we write
$a\simeq b$, we mean that there exist absolute constants $c_1,c_2>0$ such that $c_1a\ls b\ls c_2a$.  Also, if $K,L\subseteq \mathbb R^n$
we will write $K\simeq L$ if there exist absolute constants $c_1,c_2>0$ such that $ c_{1}K\subseteq L \subseteq c_{2}K$.

We refer to the book of Schneider \cite{Schneider-book} for basic facts from the Brunn-Minkowski theory and to the book
of Artstein-Avidan, Giannopoulos and V. Milman \cite{AGA-book} for basic facts from asymptotic convex geometry.

\smallskip

A convex body in ${\mathbb R}^n$ is a compact convex subset $K$ of ${\mathbb R}^n$ with non-empty interior. We say that $K$ is
symmetric if $x\in K$ implies that $-x\in K$, and that $K$ is centered if its barycenter
\begin{equation}{\rm bar}(K)=\frac{1}{|K|}\int_Kx\,dx \end{equation} is at the origin. The polar body
$K^{\circ }$ of $K$ is defined by
\begin{equation}K^{\circ }:=\{ y\in {\mathbb R}^n: \langle x,y\rangle \ls 1
\;\hbox{for all}\; x\in K\}. \end{equation}
The Blaschke-Santal\'{o} inequality states that for every centered convex body $K$ in ${\mathbb R}^n$ one has $|K||K^{\circ }|\ls\omega_n^2$,
with equality if and only if $K$ is an ellipsoid. The reverse Santal\'{o} inequality of Bourgain and V. Milman
\cite{Bourgain-VMilman-1987} states that there exists an absolute constant $c>0$ such that
\begin{equation}\left (|K||K^{\circ }|\right )^{1/n}\gr c/n,\end{equation}
where $c>0$ is an absolute constant, for every convex body $K$ in ${\mathbb R}^n$
which contains $0$ in its interior.

\smallskip

We say that a convex body $K$ is in John's position if the ellipsoid of maximal volume inscribed in $K$ is
the Euclidean unit ball $B_2^n$. John's theorem \cite{John-1948} states that
$K$ is in John's position if and only if $B_2^n\subseteq K$ and there exist $u_1,\ldots ,u_m\in {\rm
bd}(K)\cap S^{n-1}$ (contact points of $K$ and $B_2^n$) and positive real numbers $c_1,\ldots ,c_m$ such that
\begin{equation}\label{eq:bar-0}\sum_{j=1}^mc_ju_j=0\end{equation}
and the identity operator $I_n$ is decomposed in the form
\begin{equation}\label{eq:decomposition}I_n=\sum_{j=1}^mc_ju_j\otimes u_j, \end{equation}
where $(u_j\otimes u_j)(y)=\langle u_j,y\rangle u_j$. In the case where $K$ is symmetric, the second
condition \eqref{eq:decomposition} is enough (for any contact point $u$ we have that $-u$ is also
a contact point, and hence, having \eqref{eq:decomposition} we may easily produce a decomposition for which
\eqref{eq:bar-0} is also satisfied). In analogy to John's position, we say that a convex body $K$ is in L\"{o}wner's
position if the ellipsoid of minimal volume containing $K$ is the Euclidean unit ball
$B_2^n$. One can check that this holds true if and only if $K^{\circ }$ is in John's position;
 in particular, we have a decomposition of the identity similar to
\eqref{eq:decomposition}.

Assume that $u_1,\ldots ,u_m$ are unit vectors that satisfy John's decomposition \eqref{eq:decomposition} with some
positive weights $c_j$. Then, one has the useful identities
\begin{equation}\label{eq:trace}\sum_{j=1}^mc_j={\rm tr}(I_n)=n\quad \hbox{and}\quad
\sum_{j=1}^mc_j\langle u_j,z\rangle^2=1 \end{equation}
for all $z\in S^{n-1}$. Moreover,
\begin{equation}\label{eq:ball-inside}{\rm conv}\{u_1,\ldots ,u_m\}\supseteq\frac{1}{n}B_2^n.\end{equation}
In the symmetric case we actually have
\begin{equation}\label{eq:ball-inside-sym}{\rm conv}\{\pm u_1,\ldots ,\pm u_m\}\supseteq\frac{1}{\sqrt{n}}B_2^n.\end{equation}
Another useful fact, which goes back to the classical article of Dvoretzky and Rogers \cite{Dvoretzky-Rogers-1950},
is that we may choose $v_1,\ldots ,v_n$, among the $u_i$'s, which satisfy
\begin{equation}\label{eq:1.1}{\rm dist}(v_k,{\rm span}(v_1,v_2,\ldots ,v_{k-1}))\gr\sqrt{\frac{n-k+1}{n}}\end{equation}
for all $k=2,\ldots ,n$.

Finally, we state as a lemma a useful fact from linear algebra that will be used in Section 5.

\begin{lemma}\label{mdetlemma}
Let $A$ be an $n\times n$ invertible matrix. For any $u,v\in \mathbb{R}^n$ we have
\begin{equation}
\det(A+u\otimes v)=\det(A)(1+\langle A^{-1}u,v\rangle ).
\end{equation}
\end{lemma}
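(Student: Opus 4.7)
The plan is to reduce the identity to the special case $A = I_n$ and then compute the determinant of a rank-one perturbation of the identity directly from its eigenvalues.

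First, since $A$ is invertible, I would factor
\begin{equation*}
A + u\otimes v = A\bigl(I_n + A^{-1}u\otimes v\bigr),
\end{equation*}
using that $A^{-1}(u\otimes v) = (A^{-1}u)\otimes v$. By the multiplicativity of the determinant it suffices to prove
\begin{equation*}
\det(I_n + w\otimes v) = 1 + \langle w,v\rangle
\end{equation*}
for the single vector $w := A^{-1}u$. This reduces the lemma to a self-contained fact about rank-one perturbations of the identity.

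Next I would analyze the spectrum of $M := I_n + w\otimes v$. Since $(w\otimes v)(x) = \langle v,x\rangle w$, every $x\in v^{\perp}$ satisfies $Mx = x$, giving the eigenvalue $1$ with multiplicity at least $n-1$. Taking the trace,
\begin{equation*}
\mathrm{tr}(M) = n + \mathrm{tr}(w\otimes v) = n + \langle w,v\rangle,
\end{equation*}
so the remaining eigenvalue is exactly $1 + \langle w,v\rangle$. Hence $\det(M) = 1^{n-1}\cdot (1+\langle w,v\rangle) = 1 + \langle A^{-1}u,v\rangle$, and multiplying by $\det(A)$ yields the claim.

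The only mildly delicate point is the degenerate case $\langle w,v\rangle = 0$ with $w\notin v^{\perp \perp}\setminus\{0\}$: here $M$ fails to be diagonalizable, but the trace computation above still forces the characteristic polynomial to be $(\lambda-1)^n$, so the determinant equals $1$ and the formula remains correct. Alternatively one can argue by continuity, replacing $v$ by a nearby $v'$ with $\langle w,v'\rangle\neq 0$ and passing to the limit. (A second, perhaps cleaner, route is the Schur-complement identity applied to $\begin{pmatrix} I_n & -w \\ v^{T} & 1\end{pmatrix}$, whose determinant equals both $1+\langle w,v\rangle$ and $\det(I_n + w\otimes v)$; this avoids the eigenvalue discussion entirely.) There is no real obstacle — the lemma is a standard linear-algebra fact recorded for later use.
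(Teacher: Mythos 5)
Your proposal is correct, and the reduction step — factoring $A+u\otimes v = A(I_n + (A^{-1}u)\otimes v)$ and invoking multiplicativity of $\det$ — is exactly what the paper does. Where you diverge is in proving the base identity $\det(I_n+w\otimes v)=1+\langle w,v\rangle$: you argue via the spectrum of the rank-one perturbation (eigenvalue $1$ with multiplicity at least $n-1$ from $v^{\perp}$, then the trace pins down the last eigenvalue), whereas the paper proves it by a block-matrix conjugation,
\begin{equation*}
\begin{pmatrix} I_n & 0\\ v & 1\end{pmatrix}\begin{pmatrix} I_n+u\otimes v & u\\ 0 & 1\end{pmatrix}\begin{pmatrix} I_n & 0\\ -v & 1\end{pmatrix}
=\begin{pmatrix} I_n & u\\ 0 & 1+\langle u,v\rangle \end{pmatrix},
\end{equation*}
and takes determinants. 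The paper's route is purely algebraic and gives the identity in one shot with no case analysis; your spectral route is equally elementary but requires the short remark you include about the non-diagonalizable case (though in fact, once you phrase everything in terms of algebraic multiplicities, the trace argument covers $\langle w,v\rangle = 0$ without any special pleading — the ``remaining'' root of the characteristic polynomial equals $1+\langle w,v\rangle$ whether or not it coincides with $1$). Your parenthetical Schur-complement suggestion is essentially the paper's block-matrix computation in disguise, so you had the paper's method in hand as well. Either argument is a perfectly acceptable proof of this standard linear-algebra fact.
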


\begin{proof}Let $u,v\in {\mathbb R}^n$. Starting with the identity
\begin{equation}\begin{pmatrix} I_n & 0\\ v & 1\end{pmatrix}\begin{pmatrix} I_n+u\otimes v & u\\ 0 & 1\end{pmatrix}\begin{pmatrix} I_n & 0\\ -v & 1\end{pmatrix}
=\begin{pmatrix} I_n & u\\ 0 & 1+\langle u,v\rangle \end{pmatrix}\end{equation}
and taking determinants we see that $\det (I+u\otimes v)=1+\langle u,v\rangle $, which is the assertion of the lemma in the case $A=I_n$. Given any
$n\times n$ invertible matrix $A$ we write
\begin{equation}A+u\otimes v =A(I_n+A^{-1}(u\otimes v))=A(I_n+(A^{-1}u\otimes v)),\end{equation}
and applying the previous special case we obtain
\begin{equation}\det (A+u\otimes v)=\det (A)\det (I_n+(A^{-1}u\otimes v))=\det (A)(1+\langle A^{-1}u,v\rangle )\end{equation}
as claimed. \end{proof}

\section{A refinement of Nasz\'{o}di's argument}

We start with a refinement of Nasz\'{o}di's argument from \cite{Naszodi-2015}; our only new ingredient is the fact that every convex body $K$ contains a centrally symmetric
convex body $K_1$ of volume $|K_1|\gr 2^{-n}|K|$. Incorporating this in the original proof we obtain a better estimate.

\begin{theorem}\label{th:nasz-refined}Let ${\mathcal P}=\{ P_i:i\in I\}$ be a family of closed half-spaces such that $\left |\bigcap_{i\in I}P_i\right |>0$.
We may find $s\ls 2n$ and $i_1,\ldots ,i_s\in I$ such that
\begin{equation}\label{eq:nasz-0}|P_{i_1}\cap\cdots\cap P_{i_s}|\ls (Cn)^{\frac{3n}{2}}\left |\bigcap_{i\in I}P_i\right |,\end{equation}
where $C>0$ is an absolute constant.
\end{theorem}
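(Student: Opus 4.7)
The plan is to run Naszódi's selection argument from \cite{Naszodi-2015}, but applied not to $P$ itself but to a centrally symmetric subbody $P_0\subseteq P$ of comparable volume. The point is that a symmetric John decomposition gives the inclusion \eqref{eq:ball-inside-sym} with inscribed-ball radius $1/\sqrt n$, whereas a general John decomposition gives only the weaker \eqref{eq:ball-inside} with radius $1/n$. Since this radius enters the final volume estimate to the $n$-th power, the gain is a factor $n^{n/2}$, which reduces the exponent in Naszódi's bound from $2n$ to $\tfrac{3n}{2}$; the $2^n$ cost of passing from $P$ to $P_0$ is absorbed into the constant $C$.

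After the standard compactness reduction to a finite family with $0<|P|<\infty$, I would translate $P$ so that its centroid sits at the origin, and write $P=\bigcap_{i\in I}\{\langle x,w_i\rangle\ls 1\}$. Set $P_0:=P\cap(-P)$; this is a centrally symmetric convex body, and
\begin{equation*}
P_0=\bigcap_{i\in I}\{x:|\langle x,w_i\rangle|\ls 1\}
\end{equation*}
is an intersection of symmetric strips with the same defining normals. The classical centroid inequality $|K\cap(-K)|\gr 2^{-n}|K|$ (a quick consequence of Brunn--Minkowski applied to $\tfrac12 P+\tfrac12(-P)$ together with the centered hypothesis) yields $|P_0|\gr 2^{-n}|P|$. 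By affine invariance of the conclusion, I may further apply an invertible linear map placing $P_0$ in John's position as a symmetric body, so $B_2^n\subseteq P_0\subseteq\sqrt n\,B_2^n$ and there is a symmetric John decomposition $I_n=\sum_{j\in J}c_j\,w_j\otimes w_j$ supported on contact vectors $\{w_j:j\in J\}$ with $J\subseteq I$ and $\|w_j\|=1$.

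I would then apply Naszódi's selection procedure to $P_0$, replacing \eqref{eq:ball-inside} by its symmetric refinement \eqref{eq:ball-inside-sym} at every step. This should produce $s\ls 2n$ indices $i_1,\ldots,i_s\in J$ for which the polytope $Q:=P_{i_1}\cap\cdots\cap P_{i_s}$ obeys an estimate analogous to Naszódi's, but with inscribed-ball radius $1/\sqrt n$ in place of $1/n$. Combining with $|P_0|\gr 2^{-n}|P|$ and adjusting the absolute constant then gives $|Q|\ls (Cn)^{3n/2}|P|$, which is \eqref{eq:nasz-0}. The main obstacle I anticipate is in the last step: one must check that every ingredient of Naszódi's non-symmetric argument has a symmetric counterpart in which only the inscribed-ball radius changes, and also verify that the $s\ls 2n$ selected vectors $w_{i_k}$ positively span $\mathbb R^n$ so that $Q$ is bounded --- a property that, as in Naszódi's original argument, follows from the symmetric John identity restricted to the selected subset, possibly after augmenting the selection by $O(1)$ extra half-spaces to guarantee spanning without disturbing the $2n$ count.
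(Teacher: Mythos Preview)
Your proposal contains a genuine gap in the step where you claim to extract indices $i_1,\ldots,i_s\in I$ from the John decomposition of $P_0$. The contact points of the symmetric body $P_0=P\cap(-P)$ lie on $\partial P_0\subseteq\partial P\cup\partial(-P)$; a contact point coming from $\partial(-P)$ has the form $-w_i$ with $\|w_i\|=1$, and the associated supporting half-space is $\{x:\langle x,-w_i\rangle\ls 1\}$, which is \emph{not} one of the original $P_i$'s. You are right that $(-w_i)\otimes(-w_i)=w_i\otimes w_i$, so the decomposition can formally be written as $I_n=\sum_{j\in J}c_j\,w_j\otimes w_j$ with $J\subseteq I$, but this is purely algebraic: it does not make $w_j$ a contact point of $P_0$. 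The inclusion \eqref{eq:ball-inside-sym} you want to exploit is ${\rm conv}\{\pm w_j\}\supseteq\frac{1}{\sqrt n}B_2^n$, so in the Carath\'{e}odory step the boundary point $z$ is a convex combination of at most $n$ vectors from $\{\pm w_j\}$; whenever a $-w_j$ appears, the resulting half-space is outside the family, and replacing $-w_j$ by $w_j$ destroys the inclusion $R\subseteq Q^\circ$ on which the volume bound rests. There is no mechanism forcing the selection to land only on the $+w_j$'s while keeping the $1/\sqrt n$ radius --- indeed, ${\rm conv}\{w_j:j\in J\}$ need not even contain the origin, since the barycentric condition $\sum_j c_jw_j=0$ is not available for this one-sided set.

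The paper takes a different route that avoids this trap: it works throughout with the (non-symmetric) John decomposition of $P$ itself, so every selected vector is a genuine contact point of $P$ and every chosen half-space is one of the $P_i$'s. The symmetry and the $2^{-n}$ factor are introduced \emph{inside} the Dvoretzky--Rogers simplex $S\subseteq{\rm conv}\{u_j\}$: one passes to a centrally symmetric $T\subseteq S$ (centred at the centroid of $S$) with $|T|\gr 2^{-n}|S|$, and then contracts $T$ toward a point $z$ on the boundary of ${\rm conv}\{u_j:j\in J\}$ --- using only \eqref{eq:ball-inside}, with radius $1/n$ --- to obtain an origin-symmetric $Q$ still contained in the convex hull of at most $2n$ contact points of $P$. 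Blaschke--Santal\'{o} applied to this symmetric $Q$ then yields the $(Cn)^{3n/2}$ bound. In short, the symmetrisation that buys the improvement happens at the level of the simplex, not at the level of $P$.
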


\begin{proof} We start with a family ${\mathcal P}=\{ P_i:i\in I\}$ of closed half-spaces $P_i=\{x:\langle x,u_i\rangle\ls 1\}$
such that $\left |\bigcap_{i\in I}P_i\right |<\infty $. We may assume
that ${\mathcal P}$ is a finite family, therefore $P=\bigcap_{i\in I}P_i$ is a polytope. By affine invariance, we may also assume that
$P$ is in John's position. From John's theorem  there exists $J\subseteq I$ such that $u_j$, $j\in J$ are contact points of $P$
and $B_2^n$, and $a_j>0$, $j\in J$ such that
\begin{equation}\label{eq:nasz-1}I_n=\sum_{j\in J}a_ju_j\otimes u_j\quad\hbox{and}\quad \sum_{j\in J}a_ju_j=0.\end{equation}
By the Dvoretzky-Rogers lemma, we may choose $n$ of these contact points, which we denote by $v_1,\ldots ,v_n$, so that
\begin{equation}\label{eq:nasz-2}{\rm dist}(v_k,{\rm span}(v_1,v_2,\ldots ,v_{k-1}))\gr\sqrt{\frac{n-k+1}{n}}\end{equation}
for all $k=2,\ldots ,n$. It follows that the simplex $S={\rm conv}\{ v_0=0,v_1,\ldots ,v_n\}\subseteq P$ has volume
\begin{equation}\label{eq:nasz-3}|S|= \frac{1}{n!}\prod_{k=1}^n{\rm dist}(v_k,{\rm span}(v_1,v_2,\ldots ,v_{k-1}))\gr
\frac{1}{n^{\frac{n}{2}}\sqrt{n!}}.\end{equation}
Now we use the fact (see \cite[Theorem 4.1.20]{AGA-book}) that if $w$ is the center of mass of $S$ then $S-w$ contains an origin symmetric
convex body $T_1$ of volume $|T_1|\gr 2^{-n}|S-w|=2^{-n}|S|$, and hence the convex body $T=T_1+w\subseteq S$ has a center of symmetry at
$w$ and satisfies
\begin{equation}\label{eq:nasz-4}|T|\gr 2^{-n}|S|.\end{equation}
Consider the ray $\ell $ from the origin in the direction of $-w$. Then, $\ell $ intersects the boundary of ${\rm conv}\{u_j,j\in J\}$
at a point $z\in {\rm conv}\{ v_{n+1},\ldots ,v_{n+k}\}$ for some $v_{n+i}\in\{u_j, j\in J\}$
and $k\ls n$ (this follows from Carath\'{e}odory's theorem). Also, note that ${\rm conv}\{u_j,j\in J\}\supseteq\frac{1}{n}B_2^n$, and hence
$\|z\|_2\gr\frac{1}{n}$. Applying a contraction with center $z$ and ratio
$$\lambda =\frac{\|z\|_2}{\|z-w\|_2}=\frac{\|z\|_2}{\|z\|_2+\|w\|_2}\gr \frac{\|z\|_2}{1+\|z\|_2}\gr\frac{1}{n+1}$$
to $T$, we obtain an origin symmetric convex body
\begin{equation}\label{eq:nasz-5}Q\subseteq {\rm conv}\{z,v_1,\ldots ,v_n\}\subseteq {\rm conv}\{v_1,\ldots ,v_n,v_{n+1},\ldots ,v_{n+k}\}\end{equation}
with volume
\begin{equation}\label{eq:nasz-6}|Q|\gr\frac{1}{(n+1)^n}|T|\gr \frac{1}{2^n(n+1)^n}|S|\gr\frac{1}{2^n(n+1)^nn^{\frac{n}{2}}\sqrt{n!}}.\end{equation}
Consider the intersection of $n+k\ls 2n$ half-spaces
\begin{equation}\label{eq:nasz-7}R=\bigcap_{i=1}^{n+k}\{x\in {\mathbb R}^n:\langle x,v_i\rangle\ls 1\}.\end{equation}
Using the Blaschke-Santal\'{o} inequality for $Q$ and the fact that $B_2^n\subseteq P$ and $R\subseteq Q^{\circ }$ we get
\begin{equation}\label{eq:nasz-8}\frac{|R|}{|P|}\ls \frac{|Q^{\circ }|}{|B_2^n|}\ls\frac{|B_2^n|}{|Q|}.\end{equation}
Finally, from \eqref{eq:nasz-6} we see that
\begin{equation}\label{eq:nasz-9}|R|\ls \frac{\pi^{\frac{n}{2}}2^n(n+1)^nn^{\frac{n}{2}}\sqrt{n!}}{\Gamma\left(\frac{n}{2}+1\right )}|P|\end{equation}
and the result follows (with constant $C=2\sqrt[3]{\pi }$ as one can check using Stirling's formula) . \end{proof}

\section{Approximate John's decompositions}

Our first main tool is the work of Batson, Spielman and Srivastava \cite{BSS-2009} on spectral sparcification
of graphs, in which they introduced a deterministic method extracting an
approximate John's decomposition starting from a John's decomposition of the form \eqref{eq:decomposition}.
Their result is the following:

\begin{theorem}[Batson-Spielman-Srivastava]\label{th:contact-srivastava}Let $v_1,\ldots ,v_m \in S^{n-1}$ and $a_1,\ldots ,a_m>0$ such that
\begin{equation}I_n=\sum_{j=1}^{m}a_jv_j\otimes v_j.\end{equation}
Then, for every $d>1$ there exists a subset $\sigma\subseteq \{ 1,\ldots ,m\}$ with $|\sigma |\ls dn$ and $b_j>0$, $j\in \sigma $, such that
\begin{equation}I_n\preceq \sum_{j\in\sigma }b_ja_jv_j\otimes v_j \preceq \gamma_dI_n,\end{equation}
where
\begin{equation}\gamma_d:=\left(\frac{\sqrt{d}+1}{\sqrt{d}-1}\right)^2.\end{equation}
\end{theorem}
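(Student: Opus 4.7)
The plan is to adapt the barrier-function argument of Batson, Spielman and Srivastava for spectral sparsification. After the rescaling $w_j=\sqrt{a_j}\,v_j$, the hypothesis $I_n=\sum_{j=1}^m a_jv_j\otimes v_j$ becomes $\sum_{j=1}^m w_j\otimes w_j=I_n$, and the problem reduces to selecting (possibly repeated) indices $j_1,\ldots,j_s\in\{1,\ldots,m\}$ with $s\ls dn$ together with positive scalars $t_1,\ldots,t_s$ such that $A_s:=\sum_{k=1}^s t_k\,w_{j_k}\otimes w_{j_k}$ satisfies $\ell I_n\preceq A_s\preceq u I_n$ with $u/\ell\ls\gamma_d$. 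The subset $\sigma$ and the weights $b_j$ required by the theorem are then recovered by collecting the distinct indices $j$ appearing in $\{j_1,\ldots,j_s\}$ and setting $b_j=\ell^{-1}\sum_{k:\,j_k=j}t_k$.

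The vectors are added one at a time. At step $k$ one maintains barriers $\ell_k<\lambda_{\min}(A_k)\ls\lambda_{\max}(A_k)<u_k$, which are shifted upward by fixed increments $\delta_L,\delta_U>0$ at each iteration, and one monitors the spectrum of $A_k$ through the potentials
\begin{equation*}
\Phi^{u}(A)=\operatorname{tr}\bigl((uI_n-A)^{-1}\bigr),\qquad \Phi_{\ell}(A)=\operatorname{tr}\bigl((A-\ell I_n)^{-1}\bigr),
\end{equation*}
which blow up precisely when an eigenvalue of $A$ approaches $u$ from below or $\ell$ from above. The initial state is $A_0=0$ with $\ell_0<0<u_0$ chosen so that both $\Phi^{u_0}(0)=n/u_0$ and $\Phi_{\ell_0}(0)=-n/\ell_0$ are bounded by prescribed constants $\epsilon_U,\epsilon_L$.

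The crux is to show that at every iteration some admissible pair $(j,t)$ exists, i.e.\ an index $j\in\{1,\ldots,m\}$ and a weight $t>0$ such that the update $A_{k+1}=A_k+t\,w_j\otimes w_j$, $u_{k+1}=u_k+\delta_U$, $\ell_{k+1}=\ell_k+\delta_L$ preserves both potential bounds. Using the matrix determinant lemma (Lemma~\ref{mdetlemma}) together with the Sherman--Morrison identity one writes $\Phi^{u_{k+1}}(A_{k+1})$ and $\Phi_{\ell_{k+1}}(A_{k+1})$ as explicit rational functions of $t$ and of the quadratic forms $\langle(u_{k+1}I_n-A_k)^{-p}w_j,w_j\rangle$ and $\langle(A_k-\ell_{k+1}I_n)^{-p}w_j,w_j\rangle$ for $p=1,2$. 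A routine rearrangement produces an upper bound $U_k(j)$ on the admissible weight $t$ enforced by the upper potential and a lower bound $L_k(j)$ enforced by the lower potential, so it suffices to exhibit some $j$ with $L_k(j)\ls U_k(j)$. The averaging argument sums both $L_k(\cdot)$ and $U_k(\cdot)$ against $w_j\otimes w_j$ and uses the resolution of the identity $\sum_j w_j\otimes w_j=I_n$ to replace each such sum by an explicit trace; this reduces the required inequality to a scalar statement involving only $\delta_U,\delta_L,\epsilon_U,\epsilon_L$ and the current potentials, and for suitable choices of these parameters the averaged inequality $\sum_j L_k(j)\ls\sum_j U_k(j)$ holds, forcing the existence of at least one good index.

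The main obstacle, and the calibration that dictates the bound $\gamma_d$, is the simultaneous tuning of the four parameters $\delta_U,\delta_L,\epsilon_U,\epsilon_L$: the per-step increase in each potential must be absorbed while the ratio $u_s/\ell_s$ after $s=\lceil dn\rceil$ steps grows as slowly as possible. The optimal choice identified by Batson--Spielman--Srivastava takes $\epsilon_U=1/(\sqrt d-1)$, $\epsilon_L=1/(\sqrt d+1)$ with $\delta_U,\delta_L$ of comparable order, and yields after $s=\lceil dn\rceil$ iterations the ratio $u_s/\ell_s=\bigl(\tfrac{\sqrt d+1}{\sqrt d-1}\bigr)^2=\gamma_d$. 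Dividing $A_s$ by $\ell_s$ produces the desired bracketing $I_n\preceq\sum_{j\in\sigma}b_j a_j v_j\otimes v_j\preceq\gamma_d I_n$ with $|\sigma|\ls dn$, completing the proof.
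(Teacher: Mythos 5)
The paper does not prove this theorem; it is quoted verbatim as a known result from Batson, Spielman and Srivastava \cite{BSS-2009}, so there is no in-paper argument against which to check your attempt. Your outline is, however, an accurate summary of the barrier-potential method of \cite{BSS-2009}: the rescaling $w_j=\sqrt{a_j}\,v_j$, the two potentials $\Phi^u(A)=\operatorname{tr}((uI_n-A)^{-1})$ and $\Phi_\ell(A)=\operatorname{tr}((A-\ell I_n)^{-1})$, the iterative search for an admissible pair $(j,t)$ via the Sherman--Morrison/determinant identities, the averaging of the bounds $L_k(j)\ls U_k(j)$ against $\sum_j w_j\otimes w_j=I_n$, and the final division by $\ell_s$ are exactly the ingredients of their proof.

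Two cautionary points. First, the parameter calibration you quote is not the one from \cite{BSS-2009}: they take $\delta_L=1$, $\delta_U=\frac{\sqrt d+1}{\sqrt d-1}$, $\epsilon_L=\frac{1}{\sqrt d}$, $\epsilon_U=\frac{\sqrt d-1}{d+\sqrt d}$, with $u_0=n/\epsilon_U$, $\ell_0=-n/\epsilon_L$, and the admissibility constraint $\frac{1}{\delta_U}+\epsilon_U\ls\frac{1}{\delta_L}-\epsilon_L$; this yields $u_{dn}/\ell_{dn}=\gamma_d$ on the nose. Your proposed $\epsilon_U=1/(\sqrt d-1)$ diverges as $d\to 1^+$, which is incompatible with the initialization $u_0=n/\epsilon_U$ and would break the book-keeping, so that calibration would need to be corrected. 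Second, the step you label a ``routine rearrangement'' (proving that the shifted-barrier potentials remain bounded after the rank-one update, i.e.\ that $U_k(j)\gr L_k(j)$ for some $j$, which is Lemma 3.5 of \cite{BSS-2009}) is really the technical heart of the argument and not at all routine; in a full write-up it would need to be carried out carefully, but since the paper itself treats the theorem as a black box this is not a gap relative to the paper.
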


Here, given two symmetric positive definite matrices $A$ and $B$ we write $A\preceq B$ if $\langle Ax,x\rangle \ls \langle Bx,x\rangle $
for all $x\in {\mathbb R}^n$. Using this fact, Srivastava \cite{Srivastava-2012} obtained an improved version of Rudelson's theorem \cite{Rudelson-1997}
on the approximation of a symmetric convex body $K$ by a symmetric convex body $T$ which has few contact points
with its maximal volume ellipsoid: for any symmetric convex body $K$ in ${\mathbb R}^n$ and any $\epsilon >0$ there exists a
 symmetric convex body $T$ such that $T\subseteq K\subseteq (1+\epsilon )T$
and $T$ has at most $32n/\epsilon^2$ contact points with its John ellipsoid.

In order to deal with the not-necessarily symmetric case of this question, Srivastava proved in \cite{Srivastava-2012}
the next variant of Theorem \ref{th:contact-srivastava}:

\begin{theorem}[Srivastava]\label{th:sriv}Let $v_1,\ldots ,v_m\in S^{n-1}$ and $a_1,\ldots ,a_m>0$ such that
\begin{equation}I_n=\sum_{j=1}^ma_jv_j\otimes v_j\quad\hbox{and}\quad \sum_{j=1}^ma_jv_j=0.\end{equation}
Given $\varepsilon>0$ we can find a subset $\sigma $ of $\{1,\ldots ,m\}$ of cardinality $|\sigma |=O_{\varepsilon}(n)$,
positive scalars $c_i$, $i\in \sigma $ and a vector $v$ with
\begin{equation}\label{eq:vector-v}\|v\|_2^2\ls\frac{\varepsilon}{\sum_{i\in \sigma } c_i},\end{equation}
such that
\begin{equation}\label{Srivastavalemma2}
I_n\preceq\sum_{i\in \sigma } c_i(v_i+v)\otimes (v_i+v)\preceq (4+\varepsilon)I_n
\end{equation}
and \begin{equation}\label{Srivastavalemma3}\sum_{i\in \sigma } c_i(v_i+v)=0.\end{equation}
\end{theorem}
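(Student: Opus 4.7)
The natural plan is to reduce the non-symmetric John decomposition to the symmetric case covered by Theorem \ref{th:contact-srivastava} by lifting to ${\mathbb R}^{n+1}$, trading the centering constraint $\sum a_jv_j=0$ for an ordinary identity decomposition in one higher dimension. Specifically, I would set
\begin{equation*}
w_j=\frac{(\sqrt{n}\,v_j,1)}{\sqrt{n+1}}\in S^n\qquad\hbox{and}\qquad \tilde a_j=\frac{n+1}{n}\,a_j,
\end{equation*}
so that the $w_j$ are unit vectors in ${\mathbb R}^{n+1}$. A direct block computation using $\sum a_jv_j\otimes v_j=I_n$, $\sum a_jv_j=0$ and the trace identity $\sum a_j=n$ yields $\sum_{j=1}^m\tilde a_jw_j\otimes w_j=I_{n+1}$, which is a genuine John decomposition of the identity in one higher dimension.

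Apply Theorem \ref{th:contact-srivastava} in dimension $n+1$ with a parameter $d=d(\varepsilon)>1$ to be fixed later; this yields $\sigma\subseteq\{1,\ldots,m\}$ with $|\sigma|\ls d(n+1)$ and weights $b_j>0$ such that
\begin{equation*}
I_{n+1}\preceq M:=\sum_{j\in\sigma}b_j\tilde a_jw_j\otimes w_j\preceq \gamma_dI_{n+1}.
\end{equation*}
Setting $c_j=b_ja_j$, $u:=\sum_{j\in\sigma}c_jv_j$, $s:=\sum_{j\in\sigma}c_j$ and $A:=\sum_{j\in\sigma}c_jv_j\otimes v_j$, the matrix $M$ takes the block form
\begin{equation*}
M=\begin{pmatrix}A & u/\sqrt{n}\\ u^T/\sqrt{n} & s/n\end{pmatrix}.
\end{equation*}
Restricting the spectral bounds to the first $n$ coordinates and then to the last coordinate gives $I_n\preceq A\preceq \gamma_dI_n$ and $n\ls s\ls \gamma_dn$, while applying the operator-norm estimate $\|M-I_{n+1}\|\ls \gamma_d-1$ to the standard basis vector $e_{n+1}$ yields $\|u\|_2^2/n+(s/n-1)^2\ls (\gamma_d-1)^2$, and in particular $\|u\|_2^2\ls n(\gamma_d-1)^2$.

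Finally, I would set $v=-u/s$, so that $\sum_{j\in\sigma}c_j(v_j+v)=0$ automatically. Expanding the quadratic form,
\begin{equation*}
\sum_{j\in\sigma}c_j(v_j+v)\otimes (v_j+v)=A-\frac{u\otimes u}{s}.
\end{equation*}
The upper bound $A-(u\otimes u)/s\preceq \gamma_dI_n$ is immediate from $A\preceq \gamma_dI_n$ and $u\otimes u\succeq 0$, while the lower bound $A-(u\otimes u)/s\succeq (1-(\gamma_d-1)^2)I_n$ follows from $\|(u\otimes u)/s\|\ls \|u\|_2^2/s\ls (\gamma_d-1)^2$ (using $s\gr n$) together with $A\succeq I_n$. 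A uniform rescaling of the $c_j$ by the inverse of the lower eigenvalue normalises the lower bound to $I_n$ and turns the upper bound into $\gamma_d/(1-(\gamma_d-1)^2)\,I_n$, which drops below $(4+\varepsilon)I_n$ as soon as $\gamma_d-1$ is small enough. The same choice of $d$ forces $\|v\|_2^2=\|u\|_2^2/s^2\ls (\gamma_d-1)^2/n\ls \varepsilon/s$, since $s\gr n$; both constraints are simultaneously satisfied by $d=O_\varepsilon(1)$, and so $|\sigma|=O_\varepsilon(n)$.

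The main obstacle is the coupling between the centering correction $v$ and the spectral bounds: the off-diagonal block $u/\sqrt{n}$ of $M$ forces a rank-one correction $(u\otimes u)/s$ in the reduced identity decomposition back in ${\mathbb R}^n$, so the key quantitative step is to extract $\|u\|_2$ from the operator norm of $M-I_{n+1}$ rather than from a naive reading of the diagonal blocks. Once that estimate is in hand, the rest is bookkeeping to calibrate $d=d(\varepsilon)$ large enough for both the spectral and the norm constraints to close.
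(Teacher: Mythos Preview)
The paper does not give a proof of Theorem~\ref{th:sriv}; it is quoted from \cite{Srivastava-2012} and used as a black box in Section~6. Your argument is therefore not a reproduction of anything in the paper but an independent derivation, and it is essentially correct.

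Your route---lift the centred John decomposition to an ordinary decomposition of $I_{n+1}$, apply Theorem~\ref{th:contact-srivastava} there, read off the block structure of the resulting matrix, and recenter by setting $v=-u/s$---is clean and in fact yields more than the stated conclusion: since $\gamma_d/(1-(\gamma_d-1)^2)\to 1$ as $d\to\infty$, your argument delivers an upper bound $(1+\varepsilon)I_n$ rather than $(4+\varepsilon)I_n$, still with $|\sigma|=O_\varepsilon(n)$. The constant $4$ in Srivastava's statement reflects his particular proof technique; your reduction to Theorem~\ref{th:contact-srivastava} bypasses that loss. It is worth noting that the paper itself uses exactly this lifting $v_j\mapsto w_j=(\sqrt{n}\,v_j,1)/\sqrt{n+1}$ in the proofs of Theorems~\ref{th:halfspaces} and~\ref{th:halfspaces2}, but only \emph{after} invoking the sparsification theorems, not as a device to deduce Theorem~\ref{th:sriv} from Theorem~\ref{th:contact-srivastava}.

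One small point of bookkeeping: after you rescale the $c_j$ by $1/(1-(\gamma_d-1)^2)$ to normalise the lower spectral bound, the sum $\sum_{i\in\sigma}c_i$ grows by the same factor, so the right-hand side of \eqref{eq:vector-v} shrinks. Your final inequality $\|v\|_2^2\ls (\gamma_d-1)^2/n\ls \varepsilon/s$ must therefore be checked against the rescaled sum $s'=s/(1-(\gamma_d-1)^2)$ rather than $s$; since $s\ls\gamma_d n$, the requirement becomes $\gamma_d(\gamma_d-1)^2\ls\varepsilon\bigl(1-(\gamma_d-1)^2\bigr)$, which still holds once $\gamma_d$ is close enough to $1$. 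The argument closes with this adjustment.
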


Using Theorem \ref{th:sriv}, Srivastava showed that for any convex body $K$ in ${\mathbb R}^n$ and any $\epsilon >0$
there exists a convex body $T$ such that $T\subseteq K\subseteq (\sqrt{5}+\epsilon )T$
and $T$ has at most $O_{\epsilon }(n)$ contact points with its John ellipsoid. We will use Theorem \ref{th:sriv}
in order to deal with the not-necessarily symmetric case of our problem, which is clearly much more interesting
than the symmetric one.

\section{Brascamp-Lieb inequality and approximate John decompositions}

The Brascamp-Lieb inequality \cite{Brascamp-Lieb-1976} estimates the norm of the multilinear operator $G:L^{p_1}({\mathbb
R})\times\cdots\times L^{p_m}({\mathbb R})\to {\mathbb R}$ defined
by
\begin{equation}G(f_1,\ldots ,f_m)=\int_{{\mathbb R}^n}\prod_{j=1}^mf_j(\langle x,u_j\rangle )\;dx, \end{equation}
where $m\gr n$, $p_1,\ldots ,p_m\gr 1$ with $\frac{1}{p_1}+\cdots
+\frac{1}{p_m}=n$, and $u_1,\ldots ,u_m\in {\mathbb R}^n$. Brascamp
and Lieb proved that the norm of $G$ is the supremum $D$ of
\begin{equation}\frac{G(g_1,\ldots ,g_m)}{\prod_{j=1}^m\| g_j\|_{p_j}}\end{equation}
over all centered Gaussian functions $g_1,\ldots ,g_m$, i.e. over
all functions of the form $g_j(t)=e^{-\lambda_jt^2}$, $\lambda_j>0$.

If we set $c_j=1/p_j$ and replace $f_j$ by $f_j^{c_j}$ then we can
state the Brascamp-Lieb inequality in the following form.

\begin{theorem}[Bracamp-Lieb]\label{th:brascamp-lieb}
Let $m\gr n$, and let $u_1,\ldots ,u_m\in {\mathbb R}^n$ and
$c_1,\ldots ,c_m>0$ with $c_1+\cdots +c_m=n$. Then,
\begin{equation}\int_{{\mathbb R}^n}\prod_{j=1}^mf_j^{c_j}(\langle x,u_j\rangle )dx\ls D\, \prod_{j=1}^m\left (
\int_{{\mathbb R}}f_j\right )^{c_j}\end{equation} for all
integrable functions $f_j:{\mathbb R}\to [0,\infty )$, where $D=1/\sqrt{F}$ and
\begin{equation}F=\inf\left\{ \frac{\det \big (\sum_{j=1}^mc_j\lambda_ju_j\otimes u_j\big )}{\prod_{j=1}^m\lambda_j^{c_j}}
: \lambda_j>0\right\} .\end{equation}
\end{theorem}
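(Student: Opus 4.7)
The plan is two-pronged: first reduce the supremum of the Brascamp--Lieb ratio to centered Gaussian inputs (Lieb's theorem), then carry out the Gaussian case explicitly. For $f_j(t)=e^{-\lambda_j t^2}$ with $\lambda_j>0$, one has
\[
\prod_{j=1}^m f_j^{c_j}(\langle x,u_j\rangle)=\exp\bigl(-\langle A_\lambda x, x\rangle\bigr),\qquad A_\lambda:=\sum_{j=1}^m c_j\lambda_j\,u_j\otimes u_j,
\]
so, assuming $A_\lambda$ is positive definite, standard Gaussian integration gives $\int_{\mathbb{R}^n}\exp(-\langle A_\lambda x,x\rangle)\,dx=\pi^{n/2}/\sqrt{\det A_\lambda}$. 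Combined with $\int_{\mathbb{R}} f_j=\sqrt{\pi/\lambda_j}$ and the hypothesis $\sum c_j=n$, the Gaussian ratio collapses to $\sqrt{\prod_j\lambda_j^{c_j}/\det A_\lambda}$, whose supremum over $\lambda_j>0$ is exactly $1/\sqrt F=D$ by definition of $F$.

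The heart of the proof is therefore showing that the supremum of the ratio over all admissible non-negative $f_j$ equals the supremum restricted to centered Gaussians. Two standard routes are available. The first, due to Lieb (1990), is a tensorization / central-limit argument: the Brascamp--Lieb functional on the $k$-fold direct-sum configuration factors as the $k$-th power of the original, and applying a rotation on $\mathbb{R}^{nk}$ that mixes coordinates and then letting $k\to\infty$ forces any near-extremizer to be Gaussian via the central limit theorem. The second, of Carlen--Lieb--Loss / Bennett--Carbery--Christ--Tao flavour, evolves each $f_j$ under a one-parameter semigroup (heat or Ornstein--Uhlenbeck) and establishes a monotonicity of the Brascamp--Lieb ratio along the flow, whose extremal endpoint is Gaussian after rescaling.

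I expect the main obstacle to be precisely this Lieb reduction step. The Gaussian computation is routine, but without Gaussian extremality the original rearrangement argument of Brascamp--Lieb only yields the inequality with a potentially suboptimal constant, not the sharp $D=1/\sqrt F$ written here. Either the tensor-power / CLT route or the semigroup monotonicity requires a delicate analytic input (a careful CLT quantification, respectively a pointwise Cauchy--Schwarz estimate producing the time-derivative inequality); once the reduction to Gaussians is in hand, the short calculation above closes the proof.
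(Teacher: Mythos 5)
The paper does not actually prove Theorem~\ref{th:brascamp-lieb}: it is quoted as a known result, with a pointer to Brascamp--Lieb \cite{Brascamp-Lieb-1976} and the surrounding discussion of the sharp Gaussian constant; the author's own contribution in that section begins with Proposition~\ref{deterlemma}, which \emph{takes the sharp Brascamp--Lieb inequality as input} and estimates the constant $F$ under an approximate John decomposition. So there is no in-paper proof to compare yours against.

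That said, your outline is a faithful sketch of the standard proof. The Gaussian computation you carry out is correct: with $f_j(t)=e^{-\lambda_j t^2}$ and $A_\lambda=\sum_j c_j\lambda_j\,u_j\otimes u_j$ positive definite, the ratio collapses to $\sqrt{\prod_j\lambda_j^{c_j}/\det A_\lambda}$, and using $\sum_j c_j=n$ its supremum over $\lambda_j>0$ is exactly $1/\sqrt F=D$. You also correctly locate the genuine content of the theorem: the reduction to Gaussian extremizers, for which you name the two standard routes (Lieb's tensorization/CLT argument and the heat-flow monotonicity of Carlen--Lieb--Loss and Bennett--Carbery--Christ--Tao). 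What you have written is therefore a correct reduction plus an explicit list of the available proofs of the remaining step, rather than a self-contained proof --- you acknowledge this yourself. One small historical caveat: for the rank-one case stated here (each $u_j$ a vector, each $f_j$ a function on $\mathbb R$), Brascamp and Lieb's 1976 rearrangement-plus-tensorization argument \emph{does} already yield the sharp Gaussian constant; it is the higher-rank generalization that had to wait for Lieb's 1990 theorem. So the phrase ``only yields the inequality with a potentially suboptimal constant'' slightly undersells the original paper, but this does not affect the logical structure of your argument.

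In short: your proposal is a correct and well-organized outline of the classical proof, with a routine verification of the Gaussian case; the Lieb extremality step is identified but left as a black box, which is acceptable given that the paper itself treats Theorem~\ref{th:brascamp-lieb} as a citation rather than something to be reproved.
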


Calculating the constant $F=F(\{ u_j\},\{ c_j\})$ in Theorem  \ref{th:brascamp-lieb} seems difficult. An
important observation of Ball (see e.g. \cite{Ball-1989}) is that if $u_1,\ldots ,u_m\in S^{n-1}$ and $c_1,\ldots ,c_m>0$ satisfy John's
decomposition of the identity \eqref{eq:decomposition} then the constant $F=F(\{ u_j\},\{ c_j\})$ in Theorem
\ref{th:brascamp-lieb} is equal to $1$.

\medskip

The next proposition shows that we still have a Brascamp-Lieb inequality with a reasonable constant when an approximate John's decomposition is available.

\begin{proposition}\label{deterlemma}Let $\gamma >1$. If $u_1,\ldots ,u_s \in S^{n-1}$ and $c_1,\ldots ,c_s>0$ satisfy
\begin{equation}I_n\preceq A:=\sum_{j=1}^sc_ju_j\otimes u_j \preceq \gamma I_n\end{equation}
then
\begin{equation}\gamma^n\det\left(\sum_{j=1}^s\kappa_j\lambda_j u_j\otimes u_j\right)\gr\prod_{j=1}^s\lambda_j^{\kappa_j}\end{equation}
for all $\lambda_1,\ldots ,\lambda_s>0$, where $\kappa_j=c_j\langle A^{-1}u_j, u_j\rangle >0$, $1\ls j\ls s$.
\end{proposition}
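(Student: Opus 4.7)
The plan is to reduce the approximate decomposition to a genuine (unit-vector) John decomposition by conjugating with $A^{-1/2}$. Setting $\tilde u_j := A^{-1/2} u_j$ transforms $A = \sum_j c_j\,u_j \otimes u_j$ into $I_n = \sum_j c_j\,\tilde u_j \otimes \tilde u_j$, and a direct computation gives $\|\tilde u_j\|_2^2 = \langle A^{-1} u_j, u_j \rangle = \kappa_j / c_j$. Normalizing $w_j := \tilde u_j / \|\tilde u_j\|_2$ therefore produces an exact John decomposition
\begin{equation*}
I_n = \sum_{j=1}^s \kappa_j\, w_j \otimes w_j, \qquad \|w_j\|_2=1,
\end{equation*}
and in particular $\sum_j \kappa_j = \mathrm{tr}(I_n) = n$.

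With this reduction in hand, I would invoke Ball's observation that an exact John decomposition yields $F=1$ in the Brascamp-Lieb constant (recalled just after Theorem~\ref{th:brascamp-lieb}), applied to $(w_j,\kappa_j)$: for all $\mu_j>0$,
\begin{equation*}
\det\!\left(\sum_{j=1}^s \kappa_j\mu_j\, w_j \otimes w_j\right) \gr \prod_{j=1}^s \mu_j^{\kappa_j}.
\end{equation*}
Translating back via $u_j = \sqrt{\kappa_j/c_j}\,A^{1/2} w_j$, I would use the conjugation identity
\begin{equation*}
\sum_{j=1}^s \kappa_j\lambda_j\, u_j \otimes u_j = A^{1/2}\!\left(\sum_{j=1}^s \kappa_j\mu_j\, w_j \otimes w_j\right)\! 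A^{1/2}, \qquad \mu_j := \kappa_j\lambda_j/c_j,
\end{equation*}
and take determinants to obtain
\begin{equation*}
\det\!\left(\sum_{j=1}^s \kappa_j\lambda_j\, u_j \otimes u_j\right) \gr \det(A) \prod_{j=1}^s (\kappa_j/c_j)^{\kappa_j} \prod_{j=1}^s \lambda_j^{\kappa_j}.
\end{equation*}

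It remains to show $\det(A)\prod_j (\kappa_j/c_j)^{\kappa_j} \gr \gamma^{-n}$, which absorbs the excess factors into $\gamma^n$ and finishes the proof. This follows from two elementary operator facts: $A \succeq I_n$ gives $\det(A) \gr 1$, while $A \preceq \gamma I_n$ gives $A^{-1} \succeq \gamma^{-1} I_n$, hence $\kappa_j/c_j = \langle A^{-1} u_j, u_j\rangle \gr 1/\gamma$; combined with $\sum_j \kappa_j = n$ this yields $\prod_j (\kappa_j/c_j)^{\kappa_j} \gr \gamma^{-n}$. The only conceptual point is identifying the correct substitution: once one notices that rescaling $A^{-1/2} u_j$ to unit length produces exactly the weights $\kappa_j$ that appear in the statement, everything else is bookkeeping plus the two one-line operator inequalities above.
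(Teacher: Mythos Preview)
Your argument is correct. The substitution $\tilde u_j=A^{-1/2}u_j$, $w_j=\tilde u_j/\|\tilde u_j\|_2$ indeed produces an exact decomposition $I_n=\sum_j\kappa_jw_j\otimes w_j$ with unit vectors, and the conjugation identity and the two operator bounds $\det(A)\gr 1$, $\kappa_j/c_j\gr\gamma^{-1}$ combine with Ball's $F=1$ result exactly as you say.

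This is a genuinely different route from the paper's proof. The paper works directly: it expands $\det\big(\sum_jc_j\lambda_ju_j\otimes u_j\big)$ by the Cauchy--Binet formula, applies AM--GM to the resulting convex combination, and uses the matrix determinant lemma (Lemma~\ref{mdetlemma}) to identify the exponent of $\lambda_j$ as $c_j\langle A^{-1}u_j,u_j\rangle=\kappa_j$; only at the end is the inequality $c_j\ls\gamma\kappa_j$ used to replace the $c_j$ on the left by $\kappa_j$ at the cost of $\gamma^n$. Your approach is a clean reduction: conjugation by $A^{-1/2}$ converts the approximate decomposition into an exact one, revealing why the weights $\kappa_j$ are natural, and then Ball's known case does the work. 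The paper's argument has the advantage of being fully self-contained (it effectively re-proves Ball's observation en route), whereas your argument is shorter and more conceptual but takes Ball's $F=1$ as a black box. Both end with the same two elementary operator inequalities to absorb the slack into $\gamma^n$.
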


\begin{proof}For every $M\subset \{1,\ldots ,s\}$ with cardinality $|M|=n$ we define
\begin{equation}\lambda_M=\prod_{j\in M}\lambda_j\ \ \text{and}\ \ U_M=\det\left(\sum_{j\in M}c_ju_j\otimes u_j\right).\end{equation}
By the Cauchy-Binet formula we have
\begin{equation}\label{eq1}
\det\left(\sum_{j=1}^sc_j\lambda_j u_j\otimes u_j\right)=\sum_{|M|=n}\lambda_MU_M.
\end{equation}
Choosing $\lambda_j=1$ in \eqref{eq1} we get
\begin{equation}\sum_{|M|=n}U_M=\det(A).\end{equation}
By the arithmetic-geometric means inequality,
\begin{equation}\label{eq2}
\sum_{|M|=n}\lambda_M\frac{U_M}{\sum_{|M|=n} U_M}\gr\prod_{|M|=n}\lambda_M^{\frac{U_M}{\sum_{|M|=n} U_M}}
=\prod_{j=1}^s\lambda_j^{\frac{\sum_{\{M:j\in M\}}U_M}{\sum_{|M|=n} U_M}}.
\end{equation}
Applying the Cauchy-Binet formula again, we get
\begin{align*}
\frac{\sum_{\{M:j\in M\}}U_M}{\sum_{|M|=n} U_M}&=\frac{\sum_{|M|=n}U_M-\sum_{\{M:j\notin M\}}U_M}{\sum_{|M|=n} U_M}=1-\frac{\det\left(A-c_ju_j\otimes u_j\right)}{\det(A)}\\
&=1-(1-c_j\langle A^{-1}u_j,u_j\rangle )=c_j\langle A^{-1}u_j,u_j\rangle
\end{align*}
for every $j=1,\ldots ,s$, where in the last equality we used Lemma \ref{mdetlemma}. Going back to \eqref{eq1} and \eqref{eq2} we see that
\begin{equation}\label{eq3}
\frac{\det\left(\sum_{j=1}^sc_j\lambda_j u_j\otimes u_j\right)}{\det(A)}\gr\prod_{j=1}^s\lambda_j^{c_j\langle A^{-1}u_j,u_j\rangle }
\end{equation}
We set
\begin{equation}\kappa_j=c_j\langle A^{-1}u_j,u_j\rangle ,\qquad j=1,\ldots ,s.\end{equation}
Since $I_n\preceq A\preceq \gamma I_n$ we have that $\det(A)\gr 1$ and $\gamma\kappa_j=c_j\gamma \langle A^{-1}u_j,u_j\rangle \gr c_j$ for all $1\ls j\ls s$.
This implies that, for all $\lambda_1,\ldots ,\lambda_s>0$,
\begin{equation}\label{eq31}\sum_{j=1}^sc_j\lambda_j u_j\otimes u_j\preceq\gamma\left(\sum_{j=1}^s\kappa_j\lambda_j u_j\otimes u_j\right).\end{equation}
Combining \eqref{eq3} and \eqref{eq31} we get
\begin{equation}\gamma^n\det\left(\sum_{j=1}^s\kappa_j\lambda_j u_j\otimes u_j\right)\gr\prod_{j=1}^s\lambda_j^{\kappa_j}\end{equation}
as claimed. \end{proof}

\begin{remark}\rm Setting $\lambda_1=\cdots =\lambda_s=\lambda>0$ in the conclusion of Proposition \ref{deterlemma}, we get
\begin{equation}\gamma^n\lambda^n\det\left(\sum_{j=1}^s\kappa_ju_j\otimes u_j\right)\gr \lambda^{\sum_{j=1}^s\kappa_j}.\end{equation}
Since this holds true for any $\lambda >0$, we must have
\begin{equation}\label{eq:cond}\sum_{j=1}^s\kappa_j=n.\end{equation}
We can also check this directly: note that
\begin{align}\sum_{j=1}^s\kappa_j &= \sum_{j=1}^sc_j\langle A^{-1}u_j,u_j\rangle =\sum_{j=1}^sc_j\,{\rm tr}(u_j\otimes A^{-1}u_j)={\rm tr}\left (\sum_{j=1}^sc_j(u_j\otimes A^{-1}u_j)\right )\\
\nonumber &={\rm tr}\left (\sum_{j=1}^sc_j A^{-1}(u_j\otimes u_j)\right )= {\rm tr}\left (A^{-1}\Big (\sum_{j=1}^sc_j(u_j\otimes u_j)\Big )\right )
={\rm tr}(A^{-1}A)={\rm tr}(I_n)=n.
\end{align}
Having verified condition \eqref{eq:cond}, we conclude from Proposition \ref{deterlemma} that the constant in the Brascamp-Lieb inequality that corresponds to $\{u_j\}_{j=1}^s$ and $\{ \kappa_j\}_{j=1}^s$ is bounded by $\gamma^{n/2}$.
We will use this observation in the following form:
\end{remark}

\begin{theorem}\label{BL-approx}Let $\gamma >1$. Let $u_1,\ldots ,u_s \in S^{n-1}$ and $c_1,\ldots ,c_s>0$ satisfy
\begin{equation}I_n\preceq A:=\sum_{j=1}^sc_ju_j\otimes u_j \preceq \gamma I_n\end{equation}
and set $\kappa_j=c_j\langle A^{-1}u_j, u_j\rangle >0$, $1\ls j\ls s$. If
$f_1,\ldots ,f_s:{\mathbb R}\to {\mathbb R}^+$ are integrable
functions then
\begin{equation}\int_{{\mathbb R}^n}\prod_{j=1}^sf_j^{\kappa_j}(\langle x,u_j\rangle )dx
\ls \gamma^{\frac{n}{2}}\prod_{j=1}^s \left(\int_{{\mathbb R}} f_j(t)dt\right)^{\kappa_j}.\end{equation}
\end{theorem}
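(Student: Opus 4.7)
The plan is to apply the classical Brascamp-Lieb inequality (Theorem \ref{th:brascamp-lieb}) directly to the same vectors $u_1,\ldots,u_s$ but with the \emph{modified} exponents $\kappa_1,\ldots,\kappa_s$, and then read off the required upper bound on the Brascamp-Lieb constant from Proposition \ref{deterlemma}. So there are two things to verify: the normalization $\sum_{j=1}^s\kappa_j=n$ (so that Theorem \ref{th:brascamp-lieb} is applicable), and the bound $D=F^{-1/2}\ls \gamma^{n/2}$ on the constant.

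For the normalization, I would compute using the cyclicity of the trace:
\begin{equation*}
\sum_{j=1}^s\kappa_j=\sum_{j=1}^sc_j\langle A^{-1}u_j,u_j\rangle={\rm tr}\Bigl(A^{-1}\sum_{j=1}^sc_ju_j\otimes u_j\Bigr)={\rm tr}(A^{-1}A)=n,
\end{equation*}
exactly as carried out in the remark preceding the theorem. This makes $(\{u_j\},\{\kappa_j\})$ admissible input for Theorem \ref{th:brascamp-lieb}.

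For the bound on the constant, Proposition \ref{deterlemma} furnishes precisely
\begin{equation*}
\det\Bigl(\sum_{j=1}^s\kappa_j\lambda_ju_j\otimes u_j\Bigr)\gr \gamma^{-n}\prod_{j=1}^s\lambda_j^{\kappa_j}
\end{equation*}
for every choice of $\lambda_1,\ldots,\lambda_s>0$. Rearranging and taking the infimum over positive $\lambda_j$'s in the definition of $F=F(\{u_j\},\{\kappa_j\})$ in Theorem \ref{th:brascamp-lieb} gives $F\gr \gamma^{-n}$, hence $D=1/\sqrt{F}\ls \gamma^{n/2}$. Plugging this into the Brascamp-Lieb inequality with exponents $\kappa_j$ then delivers the stated inequality for arbitrary nonnegative measurable $f_j$. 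In effect, there is no further obstacle: the entire technical content of the theorem is concentrated in Proposition \ref{deterlemma}, whose proof (via Cauchy-Binet and the arithmetic-geometric means inequality, combined with the spectral bound $A\preceq\gamma I_n$) has already been carried out. The present statement is then a direct corollary of that proposition together with the classical Brascamp-Lieb inequality.
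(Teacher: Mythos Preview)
Your proposal is correct and follows essentially the same approach as the paper: the paper's ``proof'' of Theorem~\ref{BL-approx} is precisely the remark preceding it, which verifies $\sum_j\kappa_j=n$ via the trace computation and then reads off $F\gr\gamma^{-n}$ (hence $D\ls\gamma^{n/2}$) from Proposition~\ref{deterlemma}, exactly as you outline.
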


\section{Volume approximation by convex bodies with few facets}

In this section we prove the main theorems of this article. We show that the intersection of any
family of closed half-spaces is contained in an intersection of $N\simeq n$ of these
half-spaces whose volume is reasonably small. This implies our quantitative versions of Helly's theorem as explained in the
introduction.

We start with the symmetric case.

\begin{theorem}\label{th:strips}Let $\{P_i:i\in I\}$ be a family of symmetric strips
\begin{equation}P_i=\{ x\in {\mathbb R}^n:|\langle x,v_i\rangle |\ls 1\}\end{equation}
in ${\mathbb R}^n$, and let $P=\bigcap_{i\in I}P_i$. For every $d>1$ there exist $s\ls dn$ and $i_1,\ldots ,i_s\in I$
such that
\begin{equation}|P_{i_1}\cap\cdots\cap P_{i_s}|\ls \left (\frac{2}{\sqrt{\pi}}\frac{\sqrt{d}+1}{\sqrt{d}-1}\right)^n\Gamma \left(\frac{n}{2}+1\right )|P|.\end{equation}
\end{theorem}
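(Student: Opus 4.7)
The plan is to combine three ingredients already set up in the paper: the existence of a John decomposition for symmetric bodies, the Batson--Spielman--Srivastava sparsification (Theorem~\ref{th:contact-srivastava}), and the approximate Brascamp--Lieb inequality (Theorem~\ref{BL-approx}). A standard compactness argument lets us assume $\{P_i\}$ is finite, so $P$ is a symmetric polytope; if $|P|=\infty$ the statement is trivial, so assume $|P|<\infty$. Since the inequality is invariant under linear transformations of $P$, I put $P$ in John's position, getting $B_2^n\subseteq P$ together with contact points $v_j\in S^{n-1}$ (each of which is, up to sign, one of the strip normals $w_{i_j}$, because a contact point of $B_2^n$ with $\partial P$ must lie on a facet of $P$) and positive weights $a_j$ with $I_n=\sum_j a_j v_j\otimes v_j$; in the symmetric case the barycenter condition is free.

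Next, apply Theorem~\ref{th:contact-srivastava} to this decomposition with the given $d>1$ to obtain a subset $\sigma$ of size $|\sigma|\ls dn$ and positive weights $b_j$ such that
\begin{equation*}
I_n\preceq A:=\sum_{j\in\sigma}b_ja_jv_j\otimes v_j\preceq\gamma_d I_n.
\end{equation*}
Set $c_j=b_ja_j$ and $\kappa_j=c_j\langle A^{-1}v_j,v_j\rangle>0$; the identity $\sum_j\kappa_j=n$ is the trace computation from the remark after Proposition~\ref{deterlemma}.

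Now take $f_j=\mathbf{1}_{[-1,1]}$ in Theorem~\ref{BL-approx}. Because $\kappa_j>0$, the function $f_j^{\kappa_j}$ is again $\mathbf{1}_{[-1,1]}$, and $f_j^{\kappa_j}(\langle x,v_j\rangle)$ is the indicator of the strip $P_{i_j}=\{x:|\langle x,v_j\rangle|\ls 1\}$. Hence
\begin{equation*}
\prod_{j\in\sigma}f_j^{\kappa_j}(\langle x,v_j\rangle)=\mathbf{1}_Q(x),\qquad Q:=\bigcap_{j\in\sigma}P_{i_j},
\end{equation*}
and the approximate Brascamp--Lieb inequality yields
\begin{equation*}
|Q|\ls \gamma_d^{n/2}\prod_{j\in\sigma}\Bigl(\int_{\mathbb R}\mathbf{1}_{[-1,1]}\Bigr)^{\kappa_j}=\gamma_d^{n/2}\,2^{\sum_j\kappa_j}=\gamma_d^{n/2}\,2^n.
\end{equation*}
Since $P$ is in John's position, $|P|\gr |B_2^n|=\pi^{n/2}/\Gamma(n/2+1)$, so dividing and using $\sqrt{\gamma_d}=\frac{\sqrt d+1}{\sqrt d-1}$ gives exactly the bound
\begin{equation*}
|Q|\ls\Bigl(\tfrac{2}{\sqrt\pi}\tfrac{\sqrt d+1}{\sqrt d-1}\Bigr)^n\Gamma\bigl(\tfrac n2+1\bigr)\,|P|.
\end{equation*}

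The proof is essentially a clean assembly of the tools; there is no serious analytic difficulty. The one point that deserves care — and which is the reason the paper separates Theorems~\ref{th:contact-srivastava} and \ref{BL-approx} into their own sections — is matching the approximate John decomposition to the Brascamp--Lieb constant, so that the $\kappa_j$ produced by $A$ satisfy $\sum\kappa_j=n$ (as required for the Brascamp--Lieb normalization) and the constant $\gamma_d^{n/2}$ survives unscathed when we specialize to $f_j=\mathbf{1}_{[-1,1]}$. Once that is in place, the reduction is immediate.
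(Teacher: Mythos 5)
Your proposal follows the paper's own proof of Theorem~\ref{th:strips} step for step: put $P$ in John's position, apply Batson--Spielman--Srivastava (Theorem~\ref{th:contact-srivastava}) to extract $s\ls dn$ weighted contact points, feed the resulting approximate decomposition into Theorem~\ref{BL-approx} with $f_j=\mathbf{1}_{[-1,1]}$, and divide by $|P|\gr|B_2^n|$. This is correct, and the small additional remarks you make (the compactness reduction, the observation that contact points of $B_2^n$ with $\partial P$ are among the $\pm v_i$ because a strip normal of norm $<1$ cannot touch the sphere, and the check $\sum_j\kappa_j=n$) are exactly the details the paper leaves implicit.
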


\begin{proof}We may assume that $P$ is in John's position. From John's theorem there exists $J\subseteq I$ so that the vectors $v_j$, $j\in J$
are contact points of $P$ and $S^{n-1}$ and there exist $a_j>0$, $j\in J$, such that
\begin{equation}I_n=\sum_{j\in J}a_jv_j\otimes v_j.\end{equation}
Theorem \ref{th:contact-srivastava} shows that there exists a subset $\sigma\subseteq J$ with $|\sigma |=s\ls dn$ and $b_j>0$, $j\in \sigma $, such that
\begin{equation}I_n\preceq \sum_{j\in\sigma }b_ja_jv_j\otimes v_j \preceq \gamma_dI_n,\end{equation}
where $\gamma_d=\left(\frac{\sqrt{d}+1}{\sqrt{d}-1}\right)^2$. We rewrite the vectors $v_j$, $j\in \sigma $, as $w_1,\ldots ,w_s$ and
we set $c_j=a_jb_j$. Now, we apply Theorem \ref{BL-approx} to find $\kappa_j>0$, $1\ls j\ls s$ such that $\sum_{j=1}^s\kappa_j=n$ and
\begin{equation}\int_{{\mathbb R}^n}\prod_{j=1}^sf_j^{\kappa_j}(\langle x,w_j\rangle )dx
\ls \gamma_d^{\frac{n}{2}}\prod_{j=1}^s \left(\int_{{\mathbb R}} f_j(t)dt\right)^{\kappa_j}\end{equation}
for any choice of non-negative integrable functions $f_1,\ldots ,f_s$ on ${\mathbb R}^n$. Note that
\begin{equation}|P_1\cap\cdots\cap P_s| = \int_{{\mathbb R}^n} \prod_{j=1}^s {\bf
1}_{[-1,1]}(\langle x,w_j\rangle )^{\kappa_j}dx .\end{equation}
Since $\int_{{\mathbb R}}{\bf 1}_{[-1,1]}(t)dt=2$, from Theorem \ref{BL-approx} we get
\begin{equation}|P_1\cap\cdots\cap P_s| \ls 2^n\gamma_d^{\frac{n}{2}}.\end{equation}
Since $B_2^n\subseteq P$, we also have
\begin{equation}|P|\gr |B_2^n|= \frac{\pi^{n/2}}{\Gamma \left (\frac{n}{2}+1\right )}\end{equation}
and the result follows.
\end{proof}

\begin{remark}\rm The proof of Theorem \ref{th:strips} shows that if $K$ is a symmetric convex body in
John's position then for every $d>1$ there exist $s\ls dn$ and $w_1,\ldots ,w_s\in S^{n-1}$ such that
\begin{equation}K\subseteq P:=\bigcap_{j=1}^s\{ x\in {\mathbb R}^n:|\langle x,w_j\rangle |\ls 1\}\end{equation}
and
\begin{equation}\label{eq:upper-1}|P|^{\frac{1}{n}}\ls 2\frac{\sqrt{d}+1}{\sqrt{d}-1}.\end{equation}
This estimate should be compared to well-known lower bounds for the volume of intersections of strips, due
to Carl-Pajor \cite{Carl-Pajor-1988}, Gluskin \cite{Gluskin-1988} and Ball-Pajor \cite{Ball-Pajor-1990}.
If we fix $d>1$ and set $N=\lfloor dn\rfloor $ then for any choice of vectors $w_1,\ldots ,w_N$ spanning ${\mathbb R}^n$, with $\|
w_i\|_2\ls 1$ for all $1\ls i\ls N$, we know that the body $P=\bigcap_{j=1}^N\{ x\in {\mathbb R}^n:|\langle x,w_j\rangle |\ls 1\}$ satisfies
\begin{equation}\label{lower}|P|^{\frac{1}{n}}\gr \frac{2}{\sqrt{e}\sqrt{\log (1+d)}}.\end{equation}
which is of the same order (up to the dependence on $d$).

On the other hand, even if we ask that $N=n$ (which corresponds to $d=1$), one may find upper estimates
of the form \eqref{eq:upper-1} in the literature: for example, if $K$ is a symmetric convex body in John's position and if $v_1,\ldots ,v_n$ are
the vectors in \eqref{eq:1.1} then the parallelepiped
\begin{equation}\label{eq:1.2}P=\{ x\in {\mathbb R}^n:|\langle x,v_j\rangle |\ls 1,\,j=1,\ldots ,n\},\end{equation}
satisfies $K\subseteq P$ and
\begin{equation}\label{eq:1.3}|P|^{\frac{1}{n}}=2|\det (v_1,v_2,\ldots ,v_n)|^{-\frac{1}{n}}\ls \frac{2\sqrt{n}}{(n!)^{\frac{1}{2n}}}\sim
2\sqrt{e}.\end{equation}
This result is due to Dvoretzky and Rogers, and an estimate of the same order (but improving in a sense the constants involved)
was obtained by Pelczynski and Szarek in \cite{Pelczynski-Szarek-1991}. Comparing $2\sqrt{e}$ with $2\frac{\sqrt{d}+1}{\sqrt{d}-1}$
we see that our estimate provides a better bound if we allow a larger, but still proportional to the dimension, number of strips.
\end{remark}

\medskip

Next, we pass to the not-necessarily symmetric case; we consider a family $\{P_i:i\in I\}$ of closed half-spaces and ask
for a collection of $s$ half-spaces $P_j$ such that $|P_1\cap\cdots \cap P_s|\ls c_{n,s}\left |\bigcap_{i\in I}P_i\right |$.
We give two arguments. The first one is based on the ideas of Theorem \ref{th:strips} and establishes (for any $d>1$) a choice of $s\ls (d+1)(n+1)$ half-spaces and a bound of the order of $n^{3n/2}$ for the constant $c_{n,s}$.

\begin{theorem}\label{th:halfspaces}Let $\{P_i:i\in I\}$ be a family of closed half-spaces
\begin{equation}P_i=\{ x\in {\mathbb R}^n:\langle x,u_i\rangle \ls 1\}\end{equation}
in ${\mathbb R}^n$, such that $P=\bigcap_{i\in I}P_i$ has positive volume. For every $d>1$ there exist $s\ls (d+1)(n+1)$ and
$i_1,\ldots , i_s\in I$ such that
\begin{equation}|P_{i_1}\cap\cdots\cap P_{i_s}|\ls \gamma_d^{\frac{n+1}{2}}\frac{n^{n/2}(n+1)^{3(n+1)/2}}{\pi^{\frac{n}{2}}n!}\Gamma \left(\frac{n}{2}+1\right )\,|P|
\ls \gamma_d^{\frac{n+1}{2}}(Cn)^{\frac{3n}{2}}|P|,\end{equation}where $C>0$ is an absolute constant.
\end{theorem}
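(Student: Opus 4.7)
The plan is to parallel the argument for Theorem \ref{th:strips}, but lifted by one dimension to compensate for the absence of central symmetry, in the spirit of Ball's proof of the reverse isoperimetric inequality for general convex bodies. By affine invariance I assume $P$ is in John's position, and I let $(u_j,a_j)_{j\in J}$ be its John decomposition, so that $I_n=\sum_j a_j\,u_j\otimes u_j$ and $\sum_j a_j u_j=0$. Define the lifted unit vectors
\[
\tilde u_j:=\Bigl(\sqrt{\tfrac{n}{n+1}}\,u_j,\,-\tfrac{1}{\sqrt{n+1}}\Bigr)\in S^n\subset{\mathbb R}^{n+1},
\]
and weights $\tilde a_j:=\tfrac{n+1}{n}a_j$. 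A direct computation (the diagonal $(n+1,n+1)$ entry uses $\sum_j a_j={\rm tr}(I_n)=n$ and the cross terms vanish by $\sum_j a_j u_j=0$) gives the John-type identity $I_{n+1}=\sum_j \tilde a_j\,\tilde u_j\otimes\tilde u_j$ in ${\mathbb R}^{n+1}$.

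Apply Theorem \ref{th:contact-srivastava} in dimension $n+1$: one obtains $\sigma\subseteq J$ with $|\sigma|\ls d(n+1)$ and positive scalars $c_j$ such that $I_{n+1}\preceq A:=\sum_{j\in\sigma}c_j\,\tilde u_j\otimes\tilde u_j\preceq \gamma_d I_{n+1}$. Set $\kappa_j:=c_j\langle A^{-1}\tilde u_j,\tilde u_j\rangle$, so that $\sum_j\kappa_j=n+1$. Apply Theorem \ref{BL-approx} with $f_j(r):=e^{r-1}\mathbf{1}_{\{r\ls 1\}}$ (for which $\int_{\mathbb R} f_j=1$); writing $y=(x,t)\in{\mathbb R}^n\times{\mathbb R}$, this becomes
\[
\int_{{\mathbb R}^{n+1}}\mathbf{1}_{\tilde R}(y)\,e^{\langle y,\tilde v\rangle-(n+1)}\,dy\;\ls\;\gamma_d^{(n+1)/2},
\]
where $\tilde R:=\{y:\langle y,\tilde u_j\rangle\ls 1\text{ for all }j\in\sigma\}$ and $\tilde v:=\sum_j\kappa_j\,\tilde u_j=(\sqrt{n/(n+1)}\,v,\,-\sqrt{n+1})$ with $v:=\sum_j\kappa_j u_j$. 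The slice of $\tilde R$ at height $t\gr -\sqrt{n+1}$ is exactly $s_t\cdot Q$, where $s_t:=\sqrt{(n+1)/n}+t/\sqrt n$ and $Q:=\bigcap_{j\in\sigma}P_j$; it is empty for $t<-\sqrt{n+1}$.

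The remaining task is to extract a bound on $|Q|$ from the displayed inequality. Performing the $t$-integration via the change of variables $s=s_t$ and the gamma-function identity $\int_0^\infty s^n e^{-s\sqrt{n(n+1)}}\,ds=n!/(n(n+1))^{(n+1)/2}$ produces a lower bound of the form $|Q|\cdot \sqrt n\,n!/(n(n+1))^{(n+1)/2}$ for the displayed integral, provided that the residual weight $e^{\sqrt{n/(n+1)}\langle x,v\rangle}$ is controlled. To deal with this weight, and also to guarantee that $Q$ is bounded (since the sparsified collection $\{u_j:j\in\sigma\}$ need not positively span ${\mathbb R}^n$), I would adjoin to $\sigma$ the $n+1$ additional half-spaces dual to a Dvoretzky--Rogers selection of contact points of $P$, along the lines of the simplex construction used in the proof of Theorem \ref{th:nasz-refined}. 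This brings the total count to $s\ls (d+1)(n+1)$ and, together with $|P|\gr|B_2^n|=\pi^{n/2}/\Gamma(n/2+1)$ and Stirling's formula, delivers the stated bound.

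The main technical obstacle is the handling of the non-centered residual $e^{\sqrt{n/(n+1)}\langle x,v\rangle}$. Unlike the regular-simplex case --- in which the lifted vectors form an orthonormal frame, so that $v=0$, the exponent vanishes, and the bound is essentially optimal --- Batson--Spielman--Srivastava sparsification does not preserve the centering condition $\sum_j\kappa_j\tilde u_j=0$. Consequently the weight does not integrate to unity over the lifted cone and must be absorbed into a polynomial-in-$n$ loss. Setting up the Dvoretzky--Rogers auxiliary half-spaces so that this loss accounts for the discrepancy between the stated bound $\gamma_d^{(n+1)/2}n^{n/2}(n+1)^{3(n+1)/2}/(\pi^{n/2}n!)\Gamma(n/2+1)$ and the ``clean'' bound $\gamma_d^{(n+1)/2}n^{n/2}(n+1)^{(n+1)/2}/n!$ available in the centered case is the delicate step, and corresponds precisely to the ``appropriate variant of Ball's proof of the reverse isoperimetric inequality'' alluded to in the introduction.
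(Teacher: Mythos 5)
Your setup—lifting to ${\mathbb R}^{n+1}$ via $\tilde u_j=\bigl(\sqrt{n/(n+1)}\,u_j,-1/\sqrt{n+1}\bigr)$ with weights $(n+1)a_j/n$, applying Batson--Spielman--Srivastava in the lifted space, and then invoking Theorem~\ref{BL-approx} with one-sided exponential densities—matches the paper's strategy exactly, and you correctly identify the crux: sparsification destroys the centering condition $\sum_j\kappa_j\tilde u_j=0$, leaving a residual exponential factor $e^{\sqrt{n/(n+1)}\langle x,v\rangle}$ (with $v=\sum_{j\in\sigma}\kappa_j u_j$) that must be bounded below on the lifted cone. However, your proposed remedy—adjoining ``$n+1$ half-spaces dual to a Dvoretzky--Rogers selection of contact points''—is not the right move and, as stated, would not close the gap. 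A DR frame is chosen to be nearly orthogonal; it has no reason to be aligned with the direction of the residual vector $v$, and the corresponding half-spaces $\{\langle x,v_i\rangle\ls 1\}$ give no pointwise lower bound on $\langle x,v\rangle$ over the region they cut out (indeed, that region is still an unbounded cone). The loss cannot simply be ``absorbed'' without an additional structural fact tying the auxiliary half-spaces to $v$.

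What the paper does instead is the following, and it is the step your sketch is missing. Set $w:=-\frac{1}{n(n+1)}\sum_{j\in\sigma}\kappa_j u_j$. Since $\sum_{j\in\sigma}\kappa_j=n+1$ and each $\|u_j\|_2=1$, the triangle inequality gives $\|w\|_2\ls 1/n$; combined with John's fact $\conv\{u_j:j\in J\}\supseteq\frac1n B_2^n$, this places $w$ inside $\conv\{u_j:j\in J\}$. Carath\'eodory's theorem then produces $\tau\subseteq J$, $|\tau|\ls n+1$, and convex weights $\rho_i$ with $w=\sum_{i\in\tau}\rho_i u_i$. Adjoining the half-spaces $\{\langle x,u_i\rangle\ls1\}$, $i\in\tau$, yields precisely the inequality $\langle x,w\rangle\ls r/\sqrt n$ for $x\in\frac{r}{\sqrt n}Q'$ (where $Q'$ is the intersection over $\sigma\cup\tau$), which is what controls the residual exponential and makes the $r$-integral computable. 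The count $|\sigma|+|\tau|\ls d(n+1)+(n+1)=(d+1)(n+1)$ is then exactly the bound in the statement. In short: the extra half-spaces must be chosen \emph{by Carath\'eodory applied to the rescaled residual}, not by a Dvoretzky--Rogers selection; you have the right skeleton but the wrong key lemma for the residual step, and the proof as written stops before the calculation it flags as ``delicate.''
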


\begin{proof}We may assume that $P$ is in John's position. From John's theorem there exists $J\subseteq I$ so that the vectors $u_j$, $j\in J$
are contact points of $P$ and $S^{n-1}$ and there exist $a_j>0$, $j\in J$, such that
\begin{equation}I_n=\sum_{j\in J}a_ju_j\otimes u_j\quad\hbox{and}\quad \sum_{j\in J}a_ju_j=0.\end{equation}
Set \begin{equation}v_j=\sqrt{\frac{n}{n+1}}\left(-u_j,\frac{1}{\sqrt{n}}\right)\ \ \text{and}\ \ b_j=\frac{n+1}{n}a_j.\end{equation}
Then \begin{equation}I_{n+1}=\sum_{j\in J}b_jv_j\otimes v_j.\end{equation}
Theorem \ref{th:contact-srivastava} shows that there exists a subset $\sigma\subseteq J$ with $|\sigma |=s\ls d(n+1)$ and $\delta_j>0$, $j\in \sigma $, such that
\begin{equation}I_{n+1}\preceq A:=\sum_{j\in\sigma }\delta_jb_jv_j\otimes v_j \preceq \gamma_dI_{n+1},\end{equation}
where $\gamma_d=\left(\frac{\sqrt{d}+1}{\sqrt{d}-1}\right)^2$. We fix the vectors $v_j$, $j\in \sigma $, and set $c_j=\delta_jb_j$.
We also consider the vector
\begin{equation}w:=-\frac{1}{n(n+1)}\sum_{j\in\sigma }\kappa_ju_j,\end{equation}
where $\kappa_j=c_j\langle A^{-1}u_j, u_j\rangle >0$, $j\in\sigma $ are the scalars provided by Proposition \ref{deterlemma}.
Recall that, by John's theorem, ${\rm conv}\{u_j,j\in J\}\supseteq \frac{1}{n}B_2^n$, and $\|w\|_2\ls \frac{1}{n}$ by the triangle inequality and the fact that $\sum_{j\in \sigma }\kappa_j= n+1$. From Carath\'{e}odory's theorem we get that there exists $\tau\subseteq J$
 with $|\tau |\ls n+1$ and $\rho_i> 0$ with $\sum_{i\in\tau }\rho_i=1$ so that
\begin{equation}w=\sum_{i\in\tau }\rho_iu_i.\end{equation}
We define
\begin{equation}Q=\{ x\in {\mathbb R}^n:\langle x,u_j\rangle < 1\;\;\hbox{for all}\;j\in\sigma\}\end{equation}
and
\begin{equation}Q^{\prime }=Q\cap \{x\in {\mathbb R}^n:\langle x,u_i\rangle\ls 1\;\;\hbox{for all}\;i\in\tau \}.\end{equation}
From Theorem \ref{BL-approx} we know that if
$f_j:{\mathbb R}\to {\mathbb R}^+$, $j\in\sigma $ are integrable functions, then
\begin{equation}\label{eq:BL-hs-1}\int_{{\mathbb R}^{n+1}}\prod_{j\in\sigma }f_j^{\kappa_j}(\langle y,v_j\rangle )dy
\ls \gamma_d^{\frac{n+1}{2}}\prod_{j\in\sigma }\left(\int_{{\mathbb R}} f_j(t)dt\right)^{\kappa_j}.\end{equation}
For $j\in\sigma $ we define $f_j(t)=e^{-t}{\bf 1}_{[0,\infty )}(t)$. Let $y=(x,r)\in {\mathbb R}^{n+1}$. We easily check that if
$r>0$ and $x\in\frac{r}{\sqrt{n}}Q$ then $\langle x,u_j\rangle <\frac{r}{\sqrt{n}}$ for all $j\in\sigma $. This implies
that $\langle y,v_j\rangle >0$ for all $j\in\sigma $, and hence
$\prod_{j\in\sigma }f_j^{\kappa_j}(\langle y,v_j\rangle )>0$. It follows that
\begin{align}
\int_{{\mathbb R}^{n+1}}\prod_{j\in\sigma }f_j^{\kappa_j}(\langle y,v_j\rangle )dy
&\gr \int_0^{\infty }\int_{\frac{r}{\sqrt{n}}Q}\prod_{j\in\sigma }f_j^{\kappa_j}(\langle y,v_j\rangle )dy\\
\nonumber &= \int_0^{\infty }\int_{\frac{r}{\sqrt{n}}Q}\exp\left (-\sum_{j\in\sigma }\kappa_j\langle (x,r),v_j\rangle \right )dx\,dr\\
\nonumber &= \int_0^{\infty }\int_{\frac{r}{\sqrt{n}}Q}\exp\left (\sqrt{\frac{n}{n+1}}\sum_{j\in\sigma }\kappa_j\langle x,u_j\rangle -\frac{r}{\sqrt{n+1}}\sum_{j\in\sigma }\kappa_j\right )dx\,dr\\
\nonumber &= \int_0^{\infty }\int_{\frac{r}{\sqrt{n}}Q}e^{-r\sqrt{n+1}}\exp\left (-n^{3/2}\sqrt{n+1}\langle x,w\rangle \right )dx\,dr\\
\nonumber &\gr \int_0^{\infty }\int_{\frac{r}{\sqrt{n}}Q^{\prime }}e^{-r\sqrt{n+1}}\exp\left (-n^{3/2}\sqrt{n+1}\,\langle x,w\rangle \right )dx\,dr,
\end{align}
where, in the last step, we use the fact that $Q^{\prime }\subseteq Q$. Now, observe that if $x\in \frac{r}{\sqrt{n}}Q^{\prime }$ then
\begin{equation}\langle x,w\rangle =\sum_{i\in\tau }\rho_i\langle x,u_i\rangle \ls \frac{r}{\sqrt{n}}.\end{equation}
So, we get
\begin{align}
\int_{{\mathbb R}^{n+1}}\prod_{j\in\sigma }f_j^{\kappa_j}(\langle y,v_j\rangle )dy &\gr
\int_0^{\infty }\int_{\frac{r}{\sqrt{n}}Q^{\prime }}e^{-r\sqrt{n+1}-rn\sqrt{n+1}}dx\,dr\\
\nonumber &= \int_0^{\infty }\int_{\frac{r}{\sqrt{n}}Q^{\prime }}e^{-r(n+1)^{3/2}}dx\,dr\\
\nonumber &= |Q^{\prime }|\cdot\frac{1}{n^{n/2}}\int_0^{\infty }r^ne^{-r(n+1)^{3/2}}\,dr\\
\nonumber &= |Q^{\prime }|\cdot\frac{1}{n^{n/2}}\frac{n!}{(n+1)^{3(n+1)/2}}.
\end{align}Note that
\begin{equation}\prod_{j\in\sigma } \left(\int_{{\mathbb R}} f_j(t)dt\right)^{\kappa_j}=1,\end{equation}
and hence \eqref{eq:BL-hs-1} gives us
\begin{equation}|Q^{\prime }|\ls \gamma_d^{\frac{n+1}{2}}\frac{n^{n/2}(n+1)^{3(n+1)/2}}{n!}.\end{equation}
Since $Q^{\prime }$ is an intersection of at most $(d+1)(n+1)$ half-spaces and $B_2^n\subseteq P\subseteq Q^{\prime }$, the result follows
as in the symmetric case. Using Stirling's formula one can check that the statement holds true with $C_d=\left(\frac{e\gamma_d}{2\pi }\right )^{\frac{1}{3}}$.
\end{proof}

\medskip

Our next argument provides (for an absolute constant $\alpha\gg 1$) a choice of $s\ls \alpha n$ half-spaces and a much better bound of the order of $n^{n}$ for the constant $c_{n,s}$.

\begin{theorem}\label{th:halfspaces2}There exists an absolute constant $\alpha >1$ with the following
property: for every family $\{P_i:i\in I\}$ of closed half-spaces
\begin{equation}P_i=\{ x\in {\mathbb R}^n:\langle x,u_i\rangle \ls 1\}\end{equation}
in ${\mathbb R}^n$, such that $P=\bigcap_{i\in I}P_i$ has positive volume, there exist $s\ls \alpha n$ and
$i_1,\ldots , i_s\in I$ such that
\begin{equation}|P_{i_1}\cap\cdots\cap P_{i_s}|\ls (Cn)^n\,|P|,\end{equation}
where $C>0$ is an absolute constant.
\end{theorem}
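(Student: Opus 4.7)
The plan is to follow the scheme of the proof of Theorem~\ref{th:halfspaces}, but to replace the Batson-Spielman-Srivastava theorem (Theorem~\ref{th:contact-srivastava}) by the stronger Theorem~\ref{th:sriv} of Srivastava. The key observation is that Theorem~\ref{th:sriv} produces an approximate John's decomposition \emph{together with} a centering condition $\sum c_i(u_i+v)=0$ directly in $\mathbb{R}^n$, so there is no need to lift to $\mathbb{R}^{n+1}$. In the proof of Theorem~\ref{th:halfspaces} the lifting was precisely what forced an extra factor $n^{n/2}$ (arising from the integral $\int_0^{\infty}r^n e^{-r(n+1)^{3/2}}dr$ in the added coordinate); staying in $\mathbb{R}^n$ throughout should reduce the exponent from $\frac{3n}{2}$ to $n$.

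After the usual affine reduction, assume $P$ is in John's position with $I_n=\sum_{j\in J}a_ju_j\otimes u_j$ and $\sum_{j\in J}a_ju_j=0$. Fix a sufficiently small absolute constant $\varepsilon>0$ and apply Theorem~\ref{th:sriv}: one obtains a subset $\sigma\subseteq J$ of cardinality $|\sigma|=O_\varepsilon(n)$, weights $c_i>0$ ($i\in\sigma$), and a vector $v$ with $\|v\|_2^2\ls\varepsilon/\sum_{i\in\sigma}c_i$ such that, setting $\widetilde{u}_i:=u_i+v$,
\begin{equation*}
I_n\preceq\widetilde{A}:=\sum_{i\in\sigma}c_i\widetilde{u}_i\otimes\widetilde{u}_i\preceq(4+\varepsilon)I_n,\quad\sum_{i\in\sigma}c_i\widetilde{u}_i=0.
\end{equation*}
Applying Theorem~\ref{BL-approx} to these weights and vectors (the unit-norm hypothesis in its statement is used only to derive $\sum\kappa_j=n$, which continues to hold verbatim by the same trace computation), one obtains scalars $\kappa_i>0$ with $\sum_{i\in\sigma}\kappa_i=n$ and a Brascamp-Lieb inequality with constant $(4+\varepsilon)^{n/2}$.

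One then runs the analog of the Ball-type argument from the proof of Theorem~\ref{th:halfspaces} without the lift: introduce the auxiliary vector $w':=-\frac{1}{n}\sum_{i\in\sigma}\kappa_i\widetilde{u}_i$, which has small norm thanks to the centering $\sum c_i\widetilde{u}_i=0$ and the closeness of the $\kappa_i$ to the $c_i$; use $\conv\{u_j:j\in J\}\supseteq\frac{1}{n}B_2^n$ together with Carath\'{e}odory's theorem to write $w'$ as a convex combination of at most $n+1$ of the original contact points $u_i$ with indices in $\tau$; and set $Q':=\bigcap_{i\in\sigma\cup\tau}\{x:\langle x,u_i\rangle\ls 1\}$. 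Plugging the truncated exponentials $f_i(t)=e^{-t}\mathbf{1}_{[0,\infty)}(t)$ into the Brascamp-Lieb inequality, the integration now takes place directly over $\mathbb{R}^n$ rather than $\mathbb{R}^{n+1}$: the contribution $\int_0^\infty r^n e^{-\lambda r}dr$ that produced the factor $n!/(n+1)^{3(n+1)/2}$ in the lifted argument is replaced by $\int_0^\infty e^{-\lambda r}dr=\lambda^{-1}$, saving (by Stirling) precisely the factor $n^{n/2}$ and delivering $|Q'|\ls(Cn)^n|P|$.

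The main technical obstacle will be the careful bookkeeping of the perturbation $v$. The Brascamp-Lieb integrand involves $\langle x,\widetilde{u}_i\rangle=\langle x,u_i\rangle+\langle x,v\rangle$ rather than $\langle x,u_i\rangle$, and $\sum\kappa_i\widetilde{u}_i$ does not vanish exactly (only $\sum c_i\widetilde{u}_i$ does). Since $\|v\|_2=O(\sqrt{\varepsilon/n})$ and a Dvoretzky-Rogers style argument combined with $\widetilde{A}\succeq I_n$ shows $Q'\subseteq C_\varepsilon\sqrt{n}\,B_2^n$, the discrepancy $|\langle x,v\rangle|$ on $Q'$ is of order $O(\sqrt{\varepsilon})$, producing a multiplicative correction bounded by $(1+O(\sqrt{\varepsilon}))^n\ls e^{c\sqrt{\varepsilon}n}$. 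A correction of the same order arises from the non-vanishing of $\sum\kappa_i\widetilde{u}_i$. Both can be absorbed into the constant $C$ by fixing $\varepsilon$ as a sufficiently small absolute constant; with that choice, $|\sigma|+|\tau|\ls\alpha n$ for an absolute $\alpha>1$, and the theorem follows.
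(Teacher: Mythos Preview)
Your diagnosis of where the saving should come from is mistaken, and the no-lift scheme you outline has genuine gaps. The paper's proof of Theorem~\ref{th:halfspaces2} \emph{still lifts to $\mathbb{R}^{n+1}$}, exactly as in Theorem~\ref{th:halfspaces}: one sets $v_j=\sqrt{\tfrac{n}{n+1}}\bigl(-u_j,\tfrac{1}{\sqrt{n}}\bigr)$ and $b_j=\tfrac{n+1}{n}c_j$, verifies an approximate decomposition $\tfrac12 I_{n+1}\preceq\sum_{j\in\sigma}b_jv_j\otimes v_j\preceq 5I_{n+1}$ (after controlling the perturbation by $u$), and applies Theorem~\ref{BL-approx} in $\mathbb{R}^{n+1}$. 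The $r$-integral $\int_0^\infty r^n e^{-\lambda r}\,dr$ does \emph{not} disappear; what changes is the rate~$\lambda$. The test functions are taken to be $f_j(t)=e^{-(b_j/\kappa_j)t}\mathbf{1}_{[0,\infty)}(t)$, so that the exponent in the integrand becomes $-\sum_j b_j\langle y,v_j\rangle$ rather than $-\sum_j\kappa_j\langle y,v_j\rangle$. Then the centering $\sum_j c_j(u_j+u)=0$ gives $\sum_j c_ju_j=-(\sum_j c_j)u$ with $\|u\|_2\ls\sqrt{\varepsilon/n}$, and the Carath\'eodory step is applied to $w=u/\sqrt{\varepsilon n}$ (not to your $w'$); together these force $\prod_j f_j^{\kappa_j}(\langle y,v_j\rangle)\gr e^{-10r(n+1)}$ on the relevant set, i.e.\ $\lambda\sim n$ instead of $\lambda\sim n^{3/2}$. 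Stirling then converts $n^{n/2}(n+1)^{n+1}/n!$ into $(C n)^{n/2}$, which is the whole gain.

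Your unlifted plan fails at two concrete points. First, the lift is precisely Ball's device for turning the half-space constraints $\langle x,u_j\rangle\ls 1$ into the positivity constraints $\langle (x,r),v_j\rangle\gr 0$ required by one-sided exponential test functions; applying Brascamp--Lieb in $\mathbb{R}^n$ with $f_i(t)=e^{-t}\mathbf{1}_{[0,\infty)}(t)$ produces an integral over the cone $\{x:\langle x,\widetilde u_i\rangle\gr 0\}$, which bears no direct relation to $Q'$, and there is no ``$\int_0^\infty e^{-\lambda r}dr$'' appearing anywhere. Second, your claim that $w'=-\tfrac{1}{n}\sum_i\kappa_i\widetilde u_i$ is small relies on ``closeness of the $\kappa_i$ to the $c_i$'', but $\kappa_i/c_i=\langle\widetilde A^{-1}\widetilde u_i,\widetilde u_i\rangle$ is only pinned between $\|\widetilde u_i\|_2^2/(4+\varepsilon)$ and $\|\widetilde u_i\|_2^2$---a spread by the fixed factor $4+\varepsilon$, not $1+\varepsilon$---so $\sum_i\kappa_i\widetilde u_i=\sum_i(\kappa_i-c_i)\widetilde u_i$ has no reason to have norm $\ls 1$, and the Carath\'eodory step (which needs $\|w'\|_2\ls 1/n$) is unavailable. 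Relatedly, the inclusion $Q'\subseteq C_\varepsilon\sqrt{n}\,B_2^n$ is a symmetric-case phenomenon; in the present non-symmetric setting one only has $\conv\{u_j\}\supseteq\tfrac{1}{n}B_2^n$ (cf.~\eqref{eq:ball-inside}), hence $Q'\subseteq Cn\,B_2^n$, so your perturbation bound $|\langle x,v\rangle|=O(\sqrt{\varepsilon})$ is off by a factor $\sqrt{n}$.
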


\begin{proof}As in the proof of Theorem \ref{th:halfspaces} we assume that $P$ is in John's position, and we find $J\subseteq I$ so that the vectors $u_j$, $j\in J$
are contact points of $P$ and $S^{n-1}$ and there exist $a_j>0$, $j\in J$, such that
\begin{equation}I_n=\sum_{j\in J}a_ju_j\otimes u_j\quad\hbox{and}\quad \sum_{j\in J}a_ju_j=0.\end{equation}
We apply Theorem \ref{th:sriv} to find a subset $\sigma \subseteq J$ with $|\sigma |\ls\alpha_1(\varepsilon )n$, positive scalars $c_j$, $j\in \sigma $ and a vector $u$ such that
\begin{equation}\label{half11}
I_n\preceq\sum_{j\in \sigma} c_j(u_j+u)\otimes (u_j+u)\preceq (4+\varepsilon )I_n
\end{equation}
and \begin{equation}\label{half12}\sum_{j\in \sigma } c_j(u_j+u)=0\ \ \text{and}\ \ \|u\|_2^2\ls\frac{\varepsilon }{\sum_{j\in \sigma } c_j}.\end{equation}
Note that
\begin{align}
{\rm tr}\left(\sum_{j\in \sigma} c_j(u_j+u)\otimes (u_j+u)\right) &=\sum_{j\in\sigma }c_j\|u_j+u\|_2^2\\
\nonumber &=\sum_{j\in \sigma }c_j\|u_j\|_2^2+2\sum_{j\in\sigma }\langle u,c_ju_j\rangle+\left(\sum_{j\in \sigma }c_j\right)\|u\|_2^2 \\
\nonumber &=\sum_{j\in \sigma }c_j+2\Big\langle u,-\Big(\sum_{j\in \sigma }c_j\Big)u\Big\rangle+ \left (\sum_{j\in\sigma }c_j\right )\|u\|^2_2\\
\nonumber &=\sum_{j\in \sigma }c_j-\left (\sum_{j\in\sigma }c_j\right )\|u\|_2^2
\end{align}
and hence from \eqref{half11} we get that
$$n\ls\sum_{j\in \sigma }c_j-\left (\sum_{j\in\sigma }c_j\right )\|u\|_2^2\ls (4+\varepsilon )n.$$
Now, using \eqref{half12} we get
\begin{equation}\label{ineq1}
n\ls\sum_{j\in \sigma}c_j\ls (4+2\varepsilon )n.
\end{equation}
In particular,
\begin{equation}\|u\|_2^2\ls\frac{\varepsilon }{\sum_{j\in\sigma }c_j}\ls\frac{\varepsilon }{n}.\end{equation}
Recall that $\conv\{u_j,j\in J\}\supseteq\frac{1}{n}B_2^n$. Then, for the vector $w=\frac{u}{\sqrt{\varepsilon n}}$ we have $\|w\|_2\ls\frac{1}{n}$ and hence $w\in \conv\{u_j, j\in J\}$. Carath\'{e}odory's theorem shows that there exist $\tau\subseteq J$ with $|\tau |\ls n+1$ and $\rho_i> 0$, $i\in\tau $ such that
\begin{equation}w=\sum_{i\in\tau }\rho_iu_i\ \ \text{and}\ \ \sum_{i\in\tau }\rho_i=1.\end{equation}
Note that \begin{equation}\left (\sum_{j\in \sigma}c_j\right )(-u)=\sum_{j\in \sigma }c_ju_j,\end{equation}
and this shows that $-u\in {\rm conv}\{u_j:j\in \sigma\}$. It follows that the segment
 \begin{equation}\left[-u,\frac{u}{\sqrt{\varepsilon n}}\right]\subset {\rm conv}\{u_j:j\in \sigma\cup\tau \}.\end{equation}
For $j\in\sigma $ we set \begin{equation}v_j=\sqrt{\frac{n}{n+1}}\left(-u_j,\frac{1}{\sqrt{n}}\right)\ \ \text{and}\ \ b_j=\frac{n+1}{n}c_j.\end{equation}We also set $-v=\sqrt{\frac{n}{n+1}}( u,0)$.
Then, using \eqref{half12} we get
\begin{equation}
\sum_{j\in\sigma} b_j(v_j+v)\otimes (v_j+v)=
\begin{pmatrix}\sum_{j\in\sigma} c_j(u_j+u)\otimes (u_j+u) & 0 \\
0 & \frac{\sum_{j\in\sigma} c_j}{n}
\end{pmatrix},
\end{equation}
which implies, with the help of \eqref{ineq1}, that
\begin{equation}
I_{n+1}\preceq\sum_{j\in \sigma} b_j(v_j+v)\otimes (v_j+v)\preceq (4+2\varepsilon )I_{n+1}.
\end{equation}
We rewrite the last one as follows:
\begin{align}\label{eq:6-44}
& I_{n+1}-\sum_{j\in \sigma} b_jv_j\otimes v-\sum_{j\in \sigma} v\otimes b_jv_j-\left(\sum_{j\in \sigma} b_j\right)v\otimes v\\
\nonumber &\hspace*{2cm}\preceq\sum_{j\in \sigma} b_jv_j\otimes v_j\preceq 5I_{n+1}-\sum_{j\in \sigma} b_jv_j\otimes v-\sum_{j\in \sigma} v\otimes b_jv_j-\left(\sum_{j\in \sigma} b_j\right)v\otimes v.
\end{align}
Note that \begin{equation}
\sum_{j\in \sigma} b_jv_j=\sqrt{\frac{n+1}{n}}\left(-\sum_{j\in \sigma}c_ju_j,\frac{\sum_{j\in \sigma}c_j}{\sqrt{n}}\right)
=\sqrt{\frac{n+1}{n}}\left(\left(\sum_{j\in \sigma}c_j\right)u,\frac{\sum_{j\in \sigma}c_j}{\sqrt{n}}\right),
\end{equation}
so
\begin{align}
\left(\sum_{j\in \sigma} b_jv_j\right)\otimes v&=\left(\left(\sum_{j\in \sigma}c_j\right)u,\frac{\sum_{j\in \sigma}c_j}{\sqrt{n}}\right)\otimes (-u,0)\\
\nonumber &=\begin{pmatrix}
-\left(\sum_{j\in \sigma}c_j\right)u\otimes u & 0 \\
 -\frac{\left(\sum_{j\in \sigma}c_j\right)u}{\sqrt{n}} & 0
\end{pmatrix}.
\end{align}
Computing in a similar way we finally have that
\begin{equation}
T:=\sum_{j\in \sigma} b_jv_j\otimes v+\sum_{j\in \sigma} v\otimes b_jv_j+\left(\sum_{j\in \sigma} b_j\right)v\otimes v=\begin{pmatrix}
V & z \\
z & 0\end{pmatrix}.
\end{equation}
where $V=-\left(\sum_{j\in \sigma}c_j\right)u\otimes u$ and $z=-\frac{\left(\sum_{j\in \sigma}c_j\right)u}{\sqrt{n}}$. Now, for every $(x,t)\in S^n$ we have
\begin{align}
\langle T(x,t), (x,t)\rangle &=\langle Vx,x\rangle +2\langle z,t\rangle \ls \|V\|+2\|z\|_2\\
\nonumber &=\left (\sum_{j\in\sigma }c_j\right )\|u\|_2^2+\left (\sum_{j\in\sigma }c_j\right )\frac{2\|u\|_2}{\sqrt{n}}\\
\nonumber &\ls \varepsilon + (4+2\varepsilon )n\frac{2\sqrt{\varepsilon }}{n}= \varepsilon +2\sqrt{\varepsilon }(4+2\varepsilon ).
\end{align}
Choosing $\varepsilon =10^{-3}$ we get
\begin{equation}
\left\|\sum_{j\in \sigma} b_jv_j\otimes v+\sum_{j\in \sigma} v\otimes b_jv_j+\left(\sum_{j\in \sigma}b_j\right)v\otimes v\right\|
\ls\frac{1}{2},
\end{equation}
and going back to \eqref{eq:6-44} we get
\begin{equation}\label{approx1}
\frac{1}{2}I_{n+1}\preceq\sum_{j\in \sigma} b_jv_j\otimes v_j\preceq 5I_{n+1}.
\end{equation}
Now, we apply Proposition \ref{deterlemma} to find $\kappa_j>0$, $j\in \sigma $ such that if
$f_j:{\mathbb R}\to {\mathbb R}^+$ are measurable functions, then
\begin{equation}\int_{{\mathbb R}^{n+1}}\prod_{j\in \sigma }f_j^{\kappa_j}(\langle y,v_j\rangle )dy
\ls 10^{\frac{n+1}{2}}\prod_{j\in \sigma } \left(\int_{{\mathbb R}} f_j(t)dt\right)^{\kappa_j}.\end{equation}
For $j\in \sigma $ we define $f_j(t)=e^{-\frac{b_j}{k_j}t}{\bf 1}_{[0,\infty )}(t)$. Then,
\begin{equation}\label{BLresult}
\int_{{\mathbb R}^{n+1}}\prod_{j\in \sigma }f_j^{\kappa_j}(\langle y,v_j\rangle )dy
\ls 10^{\frac{n+1}{2}}\prod_{j\in\sigma } \left(\int_{{\mathbb R}} f_j(t)dt\right)^{\kappa_j}=10^{\frac{n+1}{2}}\prod_{j\in \sigma }\left(\frac{\kappa_j}{c_j}\right)^{\kappa_j}\ls 40^{\frac{n+1}{2}},
\end{equation}
recalling from the proof of Proposition \ref{deterlemma} that $\frac{\kappa_j}{b_j}=\langle A^{-1}u_j,u_j\rangle \ls 2$ (the last inequality is a
consequence of $\frac{1}{2}I_{n+1}\preceq A=\sum_{j\in \sigma }b_jv_j\otimes v_j$).

Let \begin{equation}Q=\{x\in {\mathbb R}^n:\langle x,u_j\rangle < 1,\ \ j\in \sigma \cup\tau \}.\end{equation}
We write $y=(x,r)\in {\mathbb R}^{n+1}$ and assume that
$r>0$ and $x\in\frac{r}{\sqrt{n}}Q$. Then, we have $\langle x,u_j\rangle<\frac{r}{\sqrt{n}}$ for all $j\in\sigma $. This implies that
$\langle y,v_j\rangle >0$ for all $j\in\sigma $, and hence
$\prod_{j\in\sigma }f_j^{\kappa_j}(\langle y,v_j\rangle )>0$. We also have
\begin{align}
\frac{1}{\left(\sum_{j\in \sigma } c_j\right)}\left\langle \sum_{j\in \sigma } c_ju_j,x\right\rangle&=\langle -u,x\rangle
=\sqrt{\varepsilon n}\langle -w,x\rangle =\sqrt{\varepsilon n}\left\langle -\sum_{i\in\tau } \rho_iu_i,x\right\rangle\\
\nonumber &\gr -\sqrt{\varepsilon }r,
\end{align}
where the last inequality holds since $x\in\frac{r}{\sqrt{n}}Q.$
It follows that
\begin{equation}\label{ineqproduct}
\Big\langle \sum_{j\in \sigma } c_ju_j,x\Big\rangle\gr -5\sqrt{\varepsilon }rn.
\end{equation}
Using the above (and recalling our choice of $\varepsilon =10^{-3}<1$) we see that if $y=(x,r)\in \frac{r}{\sqrt{n}}Q\times (0,\infty )$ then
\begin{align}
\prod_{j\in \sigma }f_j^{\kappa_j}(\langle y,v_j\rangle )&=\exp\left(-\sum_{j\in \sigma }b_j\left(\frac{r}{\sqrt{n}}-\sqrt{\frac{n}{n+1}}\langle x,u_j\rangle\right)\right)\\
\nonumber &=\exp \left (-\frac{r}{\sqrt{n}}\sum_{j\in \sigma }b_j\right )\exp\left(\Big\langle x,\sum_{j\in \sigma }b_ju_j\Big\rangle\right)\\
\nonumber &\gr \exp\left(-5r\frac{n+1}{\sqrt{n}}-5\sqrt{\varepsilon }r(n+1)\right)\gr\exp\left(-10r(n+1)\right).
\end{align}
Now, \eqref{BLresult} gives us
\begin{align}\frac{|Q|}{n^{\frac{n}{2}}}\int_{0}^{\infty} r^n e^{-10r(n+1)}\, dr &=
\int_0^{\infty }\int_{\frac{r}{\sqrt{n}}Q}e^{-10r(n+1)}dx\,dr
\ls \int_{{\mathbb R}^{n+1}}\prod_{j\in \sigma }f_j^{\kappa_j}(\langle y,v_j\rangle )dy\\
\nonumber &\ls 40^{\frac{n+1}{2}} .\end{align}
Direct computation and then Stirling's approximation show that
\begin{equation}|Q|\ls C_1^n\frac{n^{\frac{3n}{2}}}{n!}\ls C_2^n n^{\frac{n}{2}}\end{equation}
and $Q$ is the intersection of at most $|\sigma |+|\tau |\ls \alpha_1(10^{-3})n+n+1\ls \alpha n$ half-spaces, where $\alpha =\alpha_1(10^{-3})+2$.
Since $B_2^n\subseteq P\subseteq Q$, the result follows as in the symmetric case.
\end{proof}

\bigskip

\noindent {\bf Acknowledgement.} We would like to thank Apostolos Giannopoulos for suggesting the problem and for useful discussions.
We acknowledge support from the programme ``API$\Sigma$TEIA II -- ATOCB -- 3566" of the General Secretariat for Research and Technology of Greece.

\bigskip

\bigskip

\footnotesize
\bibliographystyle{amsplain}

\begin{thebibliography}{100}
\footnotesize

\bibitem{AGA-book}
\textrm{S.\ Artstein-Avidan, A.\ Giannopoulos and V.\ D.\ Milman},
{\sl Asymptotic Geometric Analysis, Part I}, Mathematical Surveys and Monographs {\bf 202}, Amer. Math. Soc. (2015).
\bibitem{Ball-1989} {\rm K.\ M.\ Ball}, {\sl Volumes of sections of cubes and related problems},
Lecture Notes in Mathematics {\bf 1376}, Springer,
Berlin (1989), 251-260.
\bibitem{Ball-1991} {\rm K.\ M.\ Ball}, {\sl Volume ratios and a
reverse isoperimetric inequality}, J. London Math. Soc. (2)
{\bf 44} (1991), 351-359.
\bibitem{Ball-Pajor-1990} {\rm K.\ M.\ Ball and A.\ Pajor}, {\sl Convex bodies with few faces},
Proc. Amer. Math. Soc. {\bf 110} (1990), no. 1, 225-231.

\bibitem{BKP-1982} {\rm I.\ B\'{a}r\'{a}ny, M.\ Katchalski and J.\ Pach}, {\sl Quantitative Helly-type
theorems}, Proc. Amer. Math. Soc. {\bf 86} (1982), 109-114.

\bibitem{BKP-1984} {\rm I.\ B\'{a}r\'{a}ny, M.\ Katchalski and J.\ Pach}, {\sl Helly's
theorem with volumes}, Amer. Math. Monthly {\bf 91} (1984), 362-365.
\bibitem{BSS-2009} {\rm J.\ Batson, D.\ Spielman and N.\ Srivastava},
{\sl Twice-Ramanujan Sparsifiers}, STOC' 2009: Proceedings of the 41st annual ACM Symposium on Theory of Computing (ACM, New York, 2009), pp. 255-262.
\bibitem{Bourgain-VMilman-1987} {\rm J.\ Bourgain and V.\ D.\ Milman},
{\sl New volume ratio properties for convex symmetric bodies in	${\mathbb R}^n$}, Invent. Math. {\bf 88} (1987), 319-340.

\bibitem{Brascamp-Lieb-1976} {\rm H.\ J.\ Brascamp and E.\ H.\ Lieb},
{\sl Best constants in Young's inequality, its converse and its
generalization to more than three functions}, Adv. in Math. {\bf 20}
(1976), 151-173.
\bibitem{Carl-Pajor-1988} {\rm B.\ Carl and A.\ Pajor}, {\sl Gelfand numbers of
operators with values in a Hilbert space}, Invent.\ Math.\ {\bf 94}
(1988), 479-504.
\bibitem{Dvoretzky-Rogers-1950} {\rm A.\ Dvoretzky and C.\ A.\ Rogers},
{\sl Absolute and unconditional convergence in normed linear spaces}, Proc. Nat. Acad. Sci., U.S.A {\bf 36} (1950), 192-197.
\bibitem{Gluskin-1988} {\rm E.\ D.\ Gluskin}, {\sl Extremal properties of
orthogonal parallelepipeds and their applications to the geometry of
Banach spaces}, Mat.\ Sb.\ (N.S.) {\bf 136} (1988), 85-96.

\bibitem{John-1948} {\rm F.\ John}, {\sl Extremum problems
with inequalities as subsidiary conditions}, Courant Anniversary
Volume, Interscience, New York (1948), 187-204.
\bibitem{LHRS-2015} {\rm J.\ A,\ De Loera, R.\ N.\ La Haye, D.\ Rolnick and P.\ Sober\'{o}n},
{\sl Quantitative Tverberg, Helly and Carath\'{e}odory theorems}, Preprint.
\bibitem{Naszodi-2015} {\rm M.\ Nasz\'{o}di}, {\sl Proof of a conjecture of B\'{a}r\'{a}ny, Katchalski
and Pach}, Preprint.
\bibitem{Pelczynski-Szarek-1991} {\rm A.\ Pelczynski and S.\ J.\ Szarek}, {\sl On parallelepipeds of minimal volume
containing a convex symmetric body in ${\mathbb R}^n$}, Math. Proc. Camb. Phil. Soc. (1991), 125-148.
\bibitem{Rudelson-1997} {\rm M.\ Rudelson}, {\sl Contact points of
convex bodies}, Israel J. Math. {\bf 101} (1997), 93-124.
\bibitem{Schneider-book} {\rm R.\ Schneider}, {\sl Convex Bodies: The Brunn-Minkowski Theory},
Second expanded edition. Encyclopedia of Mathematics and
Its Applications 151, Cambridge University Press, Cambridge,
2014.
\bibitem{Srivastava-2012} {\rm N.\ Srivastava}, {\sl On contact points of convex bodies}, in Geom. Aspects of Funct.
Analysis, Lecture Notes in Mathematics {\bf 2050} (2012), 393-412.
\end{thebibliography}

\medskip

\thanks{\noindent {\bf Keywords:}  Convex bodies, Helly's theorem, John's decomposition, Brascamp-Lieb inequality.}

\smallskip

\thanks{\noindent {\bf 2010 MSC:} Primary 26D15; Secondary 52A23, 52A35, 46B06.}

\bigskip

\bigskip

\noindent \textsc{Silouanos \ Brazitikos}: Department of
Mathematics, University of Athens, Panepistimioupolis 157-84,
Athens, Greece.

\smallskip

\noindent \textit{E-mail:} \texttt{silouanb@math.uoa.gr}

\end{document}